\newtheorem{theorem}{Theorem}[section]
\newtheorem{proposition}[theorem]{Proposition}
\newtheorem{lemma}[theorem]{Lemma}
\newtheorem{corollary}[theorem]{Corollary}
\theoremstyle{definition}
\newtheorem{example}[theorem]{Example}
\newtheorem{question}[theorem]{Question}
\newtheorem{conjecture}[theorem]{Conjecture}
\newtheorem{remark}[theorem]{Remark}
\newcommand{\ZZ}{ \ensuremath{\mathbb{Z}}}
\newcommand{\lk}{{\mathrm{lk}}}
\newcommand{\st}{\mathrm{st}}
\newcommand{\kk}{\mathbf{k}}
\newcommand{\mideal}{\ensuremath{\mathfrak{m}}}
\def\cocoa{{\hbox{\rm C\kern-.13em o\kern-.07em C\kern-.13em o\kern-.15em A}}}
\newcommand{\Soc}{\mathrm{Soc}\ \!}
\begin{document}

\title[On $r$-stacked triangulated manifolds]{On $r$-stacked triangulated manifolds}

\author{Satoshi Murai}
\address{
Satoshi Murai,
Department of Mathematical Science,
Faculty of Science,
Yamaguchi University,
1677-1 Yoshida, Yamaguchi 753-8512, Japan.
}

\author{Eran Nevo}
\address{
Eran Nevo,
Department of Mathematics,
Ben Gurion University of the Negev,
Be'er Sheva 84105, Israel
}


\thanks{
Research of the first author was partially supported by KAKENHI 22740018.
Research of the second author was partially supported by Marie Curie grant IRG-270923 and by ISF grant.
}

\begin{abstract}
The notion of $r$-stackedness for simplicial polytopes was introduced by McMullen and Walkup in 1971 as a generalization of stacked polytopes.
In this paper,
we define the $r$-stackedness for triangulated homology manifolds
and study their basic properties.
In addition, we find a new necessary condition for face vectors of triangulated manifolds
when all the vertex links are polytopal.
\end{abstract}

\maketitle

\section{Introduction}

A triangulated $d$-ball is said  to be {\em $r$-stacked} if it has no interior faces of dimension $\leq d-r-1$,
and the boundary of an $r$-stacked $d$-ball is called an $r$-stacked $(d-1)$-sphere.
It is known that $r$-stacked $d$-balls and $(d-1)$-spheres with $r < \frac d 2$ have many nice combinatorial properties,
and they have been used to obtain several important results on polytopes and triangulated spheres.
For example, they appeared in Barnette's lower bound theorem \cite{Ba, Ba2} and in the generalized lower bound conjecture given by McMullen and Walkup \cite{MW}.
They also appeared in the proof of the sufficiency of the famous $g$-theorem by Billera and Lee \cite{BL} (see \cite{KlL})
as well as in the construction of many non-polytopal triangulated spheres given by Kalai \cite{Ka1}.
The purpose of this paper is to extend this notion to triangulated manifolds,
and establish their fundamental properties.

Throughout the paper,
we fix a field $\kk$.
For a simplicial complex $\Delta$ and its face $F \in \Delta$,
the {\em link of $F$ in $\Delta$} is the simplicial complex
$$\lk_\Delta(F)=\{ G \in \Delta: F \cup G  \in \Delta \mbox{ and } F \cap G = \emptyset\}.$$
A simplicial complex $\Delta$ of dimension $d$ is said to be a {\em $(\kk$-$)$homology $d$-sphere}
if, for all faces $F \in \Delta$ (including the empty face $\emptyset$),
one has $\beta_i(\lk_\Delta(F))=0$ for $i \ne d-\#F$ and $\beta_{d-\#F}(\lk_\Delta(F))=1$,
where $\beta_i(\Delta)=\dim_\kk \widetilde H_i(\Delta;\kk)$ is the $i$th {\em Betti number} of $\Delta$ over $\kk$.
A simplicial complex is said to be {\em pure} if all its facets have the same dimension.
A {\em ($\kk$-)homology $d$-manifold without boundary} is a $d$-dimensional pure simplicial complex
all whose vertex links are $\kk$-homology spheres.
A pure $d$-dimensional simplicial complex $\Delta$ is said to be a {\em ($\kk$-)homology $d$-manifold with boundary} if it satisfies
\begin{itemize}
\item[(i)] for all $\emptyset \ne F \in \Delta$,
$\beta_i(\lk_\Delta(F))$ vanish for $i \ne d - \#F$ and is equal to $0$ or $1$ for $i = d - \#F$.
\item[(ii)] the {\em boundary} $\partial \Delta=\{F \in \Delta: \beta_i(\lk_\Delta(F))=0\} \cup \{ \emptyset \}$ of $\Delta$
is a $\kk$-homology $(d-1)$-manifold without boundary.
\end{itemize}
Triangulations of topological manifolds are examples of homology manifolds.
Also, condition (ii) can be omitted if we replace $\kk$ by $\ZZ$ (see \cite{Mi}).

We say that a homology $d$-manifold $\Delta$ with boundary is {\em $r$-stacked} if it has no interior faces (namely, faces which are not in $\partial \Delta$) of dimension $\leq d-r-1$.
Also, a homology manifold without boundary is said to be {\em $r$-stacked} if it is the boundary of an $r$-stacked homology manifold with boundary.
We prove the following properties for $r$-stacked homology manifolds.

\begin{itemize}
\item[(a)] {\em Enumerative criterion:}
We give a simple criterion for the $r$-stackedness in terms of $h$-vectors and Betti numbers
for homology manifolds with boundary
(Theorem \ref{3.1}).
Also, we give a similar result for $(r-1)$-stacked homology $(d-1)$-manifolds without boundary with $r \leq \frac d 2$
when all the vertex links are polytopal (Corollary \ref{6.9}).
In particular, these results prove that $r$-stackedness depends only on face numbers and Betti numbers for these manifolds.
\item[(b)] {\em Vanishing of Betti numbers and missing faces:}
We show that if a homology manifold (with or without boundary) is $r$-stacked,
then it has zero Betti numbers and no missing faces in certain dimensions
(Corollary \ref{3.3} and Theorem \ref{4.1}).
\item[(c)] {\em Uniqueness of stacked manifolds:}
For an $(r-1)$-stacked $(d-1)$-manifold $\Delta$ without boundary,
it is shown that if $r \leq \frac d 2$ then there is a unique $(r-1)$-stacked homology manifold $\Sigma$
such that $\partial \Sigma=\Delta$ (Theorem \ref{4.3}).
\item[(d)] {\em Local criterion:}
For $r < \frac d 2$,
we show that a homology $(d-1)$-manifold without boundary is $(r-1)$-stacked if and only if all its vertex links are $(r-1)$-stacked (Theorem \ref{4.4}).
\item[(e)] {\em The $\tilde g$-vector --- a new necessary condition for face vectors:}
Motivated by a recent conjecture given by Bagchi and Datta,
we define the $\tilde g$-vector of a simplicial complex $\Delta$,
and show that it is an $M$-vector if $\Delta$ is an $(r-1)$-stacked homology $(d-1)$-manifolds without boundary when $r \leq \frac d 2$.
Moreover, regardless of stackedness,
we show that the same result holds for connected orientable rational homology manifolds all whose vertex links are polytopal (Theorem \ref{5.4}).
\end{itemize}
Most of the results listed above are natural extensions of known results for triangulated balls and spheres.
However their proofs are not straightforward and we believe that these properties are useful in the study of face numbers of triangulated manifolds.
Indeed, 
the results about the $\tilde g$-vector
prove that \cite[Conjecture 1.6]{BD2}
holds for all homology manifolds all whose vertex links are polytopal.

About (c) and (d), the same results were proved independently by Bagchi and Datta \cite[Theorem 2.19]{BD3} with essentially the same proof.
Their results also prove vanishing of missing faces in (b).

This paper is organized as follows.
In Section 2, we recall basic properties of $h'$- and $h''$-vectors
which play an important role in the study of face numbers of homology manifolds.
In Section 3, we study $r$-stacked homology manifolds with boundary.
In Sections 4 and 5, we study $r$-stacked homology manifolds without boundary and consider the $\tilde g$-vector.

\section{ $h'$- and $h''$-vectors}

In this section, we recall $h'$- and $h''$-vectors and their algebraic meanings.
We first recall some basics on simplicial complexes.
A simplicial complex $\Delta$ on the vertex set $V$ is a collection of subsets of $V$
satisfying that $F \in \Delta$ and $G \subset F$ imply $G \in \Delta$.
Elements of $\Delta$ are called {\em faces} of $\Delta$
and subsets of $V$ which are not faces of $\Delta$ are called {\em non-faces} of $\Delta$.
The maximal faces of $\Delta$ (with respect to inclusion) are called the {\em facets} of $\Delta$
and the minimal non-faces of $\Delta$ are called the {\em missing faces} of $\Delta$.
The dimension of a face (or a missing face) $F$ is $\#F-1$,
where $\#X$ denotes the cardinality of a finite set $X$,
and a face (or a missing face) of dimension $k$ is called a {\em $k$-face} (or a {\em missing $k$-face}). 
Also, the dimension of a simplicial complex is the maximum dimension of its faces. 
For a simplicial complex $\Delta$ of dimension $d-1$,
let $f_k=f_k(\Delta)$ be the number of $k$-faces of $\Delta$
for $k=-1,0,\dots,d-1$, where $f_{-1}=1$.
The vector $f(\Delta)=(f_{-1},f_0,\dots,f_{d-1})$
is called the $f$-vector of $\Delta$.
Also, the $h$-vector $h(\Delta)=(h_0(\Delta),h_1(\Delta),\dots,h_d(\Delta))$ of $\Delta$ is
defined by the relation
$$
\sum_{i=0}^d h_i(\Delta) t^i = \sum_{i=0}^d f_{i-1}(\Delta) t^i (1-t)^{d-i}.
$$
Now we define $h'$- and $h''$-vectors.
For a simplicial complex $\Delta$ of dimension $d-1$,
its {\em $h'$-vector} $h'(\Delta)=(h'_0(\Delta),\dots,h'_d(\Delta))$ and
its {\em $h''$-vector} $h''(\Delta)=(h''_0(\Delta),\dots,h''_d(\Delta))$
are defined by
$$h'_i(\Delta)= h_i(\Delta) - \binom d i \sum_{k=1}^{i-1} (-1)^{i-k} \beta_{k-1}(\Delta)$$
for $i=0,1,\dots,d$, and by
$$h''_i(\Delta)= h_i(\Delta) - \binom d i \sum_{k=1}^{i} (-1)^{i-k} \beta_{k-1}(\Delta)
= h_i'(\Delta) - \binom d i \beta_{i-1}(\Delta)$$
for $i=0,1,\dots,d-1$ and $h''_d(\Delta)=h'_d(\Delta)$.
Note that 
$$\textstyle{h'_d(\Delta)=\sum_{\ell=0}^d (-1)^{\ell-d} f_{\ell-1}- \sum_{k=0}^{d-1} (-1)^{d-k} \beta_{k-1}(\Delta)=
\beta_{d-1}(\Delta).}$$
If one knows the Betti numbers of $\Delta$, then knowing $h(\Delta)$
is equivalent to knowing $h'(\Delta)$ (or $h''(\Delta)$).

$h'$- and $h''$-vectors have nice algebraic meanings in terms of Stanley--Reisner rings.
Let $S=\kk[x_1,\dots,x_n]$ be a polynomial ring over a field $\kk$ with $\deg x_i=1$ for all $i$.
For a simplicial complex $\Delta$ on $[n]=\{1,2,\dots,n\}$,
the {\em Stanley--Reisner ring} of $\Delta$ is the quotient ring
$$\kk [\Delta]=S/I_\Delta$$
where $I_\Delta=(x_{i_1} \cdots x_{i_k}: \{i_1,\dots,i_k\} \not \in \Delta)$.
If $\Delta$ has dimension $d-1$ and $\kk$ is infinite,
there is a sequence $\Theta=\theta_1,\dots,\theta_d \in S_1$ of linear forms such that $\dim_\kk (S/(I_\Delta+(\Theta))) < \infty$.
This sequence $\Theta$ is called a {\em linear system of parameter} (l.s.o.p.\ for short) of $\kk[\Delta]$.
In the rest of this paper, we always assume that $\kk$ is infinite.

A simplicial complex $\Delta$ of dimension $d-1$ is said to be {\em Cohen--Macaulay} (over $\kk$)
if, for all $F \in \Delta$, $\widetilde H_i(\lk_\Delta(F); \kk)$ vanishes for $i \ne d -1- \#F$.
Note that any Cohen--Macaulay simplicial complex is pure.
A pure simplicial complex is said to be {\em Buchsbaum} (over $\kk$) if all its vertex links are Cohen--Macaulay.
Homology manifolds are examples of Buchsbaum simplicial complexes.

Let $\mideal=(x_1,\dots,x_n)$ be the graded maximal ideal of $S$.
For a graded $S$-module $N$,
let $F_N(t)= \sum_{i \in \ZZ} (\dim_\kk N_i) t^i$ be the {\em Hilbert Series} of $N$,
where $N_i$ is the graded component of $N$ of degree $i$,
and let $\Soc(N)=\{ f \in N: \mideal f=0\}$ be the {\em socle} of $N$.
The following  results shown in \cite[p.\ 137]{Sc} and \cite[Theorem 3.5]{NS1}
give algebraic meanings of $h'$- and $h''$-vectors.

\begin{lemma}
\label{2.1}
Let $\Delta$ be a Buchsbaum simplicial complex of dimension $d-1$,
$\Theta=\theta_1,\dots,\theta_d$ an l.s.o.p.\ of $\kk[\Delta]$ and $R=S/(I_\Delta+(\Theta))$.
Then
\begin{itemize}
\item[(i)] (Schenzel) $F_R(t)= h'_0(\Delta)+h_1'(\Delta)t+ \cdots + h_d'(\Delta)t^d$.
\item[(ii)] (Novik--Swartz) $\dim_\kk (\Soc(R))_i \geq \binom d i \beta_{i-1}(\Delta)$ for all $i$.
In particular, there is an ideal $N \subset \Soc(R)$ such that
$F_{R/N}(t)= h''_0(\Delta)+h_1''(\Delta)t+ \cdots + h_d''(\Delta)t^d$.
\end{itemize}
\end{lemma}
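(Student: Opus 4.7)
The foundation for both parts is the local cohomology of the Stanley--Reisner ring. By Hochster's formula the degree-zero component of $H^i_\mideal(\kk[\Delta])$ equals $\widetilde H^{i-1}(\Delta;\kk)$, of dimension $\beta_{i-1}(\Delta)$. The Buchsbaum hypothesis on $\Delta$ is equivalent to $H^i_\mideal(\kk[\Delta])$ being concentrated in degree $0$ for all $i<d$; consequently $\mideal$, and therefore each $\theta_j$, annihilates these modules. I would first secure this step via the Buchsbaum criterion together with the Cohen--Macaulayness of vertex links.

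For part (i), the plan is to compute $F_R(t)$ through the Koszul complex $K_\bullet:=K_\bullet(\Theta;\kk[\Delta])$. The double complex obtained by composing $K_\bullet$ with the \v{C}ech complex on $\Theta$ produces a spectral sequence that collapses at $E_2$ because $\Theta$ acts as zero on $H^q_\mideal(\kk[\Delta])$ for $q<d$. This collapse expresses each Hilbert series $F_{H_p(K_\bullet)}(t)$ as an explicit expression in the $\beta_{q-1}(\Delta)$'s. Solving the Euler relation $\sum_p (-1)^p F_{H_p(K_\bullet)}(t)=(1-t)^d F_{\kk[\Delta]}(t)$ for $F_R(t)=F_{H_0(K_\bullet)}(t)$ and matching coefficients against the definition
\[
h'_i(\Delta)=h_i(\Delta)-\binom{d}{i}\sum_{k=1}^{i-1}(-1)^{i-k}\beta_{k-1}(\Delta)
\]
delivers~(i).

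For part (ii), following Novik--Swartz, for each $i<d$ I want to inject $\bigwedge^i \kk^d \otimes H^i_\mideal(\kk[\Delta])_0$ into $\Soc(R)_i$. Using the annihilation $\theta_j\cdot \eta=0$ for $\eta\in H^i_\mideal(\kk[\Delta])_0$, together with the \v{C}ech model, each class $\eta$ lifts along a chosen $i$-subset $I\subseteq [d]$ to an element $\xi_{I,\eta}\in R$ of degree $i$ which is killed by $\mideal$ modulo $(\Theta)$. Varying $I$ over $\binom{d}{i}$ subsets and $\eta$ over a basis of $\widetilde H^{i-1}(\Delta;\kk)$ produces the required $\binom{d}{i}\beta_{i-1}(\Delta)$ candidate socle elements; the ideal $N$ they generate then satisfies, in combination with (i), the asserted Hilbert series formula for $R/N$.

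The principal obstacle is the linear independence of the $\xi_{I,\eta}$'s inside $R$ --- equivalently, the injectivity of the natural map $\bigwedge^i\kk^d\otimes H^i_\mideal(\kk[\Delta])_0\to R$ whose image lies in $\Soc(R)$. This is where the spectral-sequence degeneration from~(i) is indispensable, as it identifies the socle contribution of $\eta$ with a free module of rank $\binom{d}{i}$ over the exterior algebra on $\Theta$, securing the non-degeneracy. Once that injectivity is in hand, the Hilbert-series comparison against $h''_i=h'_i-\binom{d}{i}\beta_{i-1}$ is mechanical.
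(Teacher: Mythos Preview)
The paper does not prove Lemma~\ref{2.1}; it is quoted as a known result with citations to Schenzel \cite[p.~137]{Sc} for~(i) and Novik--Swartz \cite[Theorem~3.5]{NS1} for~(ii). So there is no in-paper argument to compare against, only the original sources.

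Your plan is essentially the strategy of those original proofs. The observation that Buchsbaumness of $\kk[\Delta]$ forces $H^i_\mideal(\kk[\Delta])$ to be concentrated in degree~$0$ for $i<d$, with $\dim_\kk H^i_\mideal(\kk[\Delta])_0=\beta_{i-1}(\Delta)$ via Hochster's formula, is exactly the input both Schenzel and Novik--Swartz use. For~(i), Schenzel's own argument is phrased via the short exact sequences obtained by successively factoring out the $\theta_j$ (this is the content of Lemma~\ref{2.4} in the present paper), which is equivalent to your Koszul/spectral-sequence packaging; the Euler-characteristic identity you invoke is just the alternating sum of these kernel dimensions. For~(ii), your outline---lift local cohomology classes to degree-$i$ socle elements indexed by $i$-subsets of $[d]$, then verify independence---is precisely the Novik--Swartz construction.

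One caution on the details: the assertion that the Koszul--\v{C}ech spectral sequence ``collapses at $E_2$'' is not literally true without further argument, since the row $q=d$ carries the infinite-dimensional module $H^d_\mideal(\kk[\Delta])$ on which $\Theta$ does not act trivially; the collapse you need is only on the rows $q<d$, and the contribution of the top row must be handled separately (it is what produces the Cohen--Macaulay part $h_i(\Delta)$ of the formula). Similarly, in~(ii) the independence of the $\xi_{I,\eta}$ is not won by the degeneration alone; Novik--Swartz establish it by identifying the image with a direct summand coming from $\binom{d}{i}$ copies of $H^i_\mideal(\kk[\Delta])_0$ inside the appropriate Koszul cohomology. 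Your sketch gestures at this but would need those identifications made explicit to be a proof.
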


In the rest of this section,
we study the relation between the vanishing of $h''$-numbers and missing faces.
For a homogeneous ideal $I \subset S$,
let $\mu_k(I)$ be the number of elements of degree $k$ in a minimal generating set of $I$,
namely, $\mu_k(I)=\dim_\kk (I/\mideal I)_k$.
Since missing faces of $\Delta$ correspond to the minimal generators of $I_\Delta$,
$\mu_k(I_\Delta)$ is equal to the number of missing $(k-1)$-faces of $\Delta$.

\begin{lemma}
\label{2.2}
Let $I \subset S$ be a homogeneous ideal, $w \in S_1$ a linear form and $k \geq 2$ an integer.
If the multiplication $\times w : (S/I)_{k-1} \to (S/I)_k$ is injective then $\mu_k(I)= \mu_k(I+(w))$.
\end{lemma}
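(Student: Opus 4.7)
The plan is to compute both $\mu_k(I)$ and $\mu_k(I+(w))$ directly from the definition $\mu_k(J) = \dim_\kk (J/\mideal J)_k$ and reduce the equality to a direct comparison of graded subspaces of $S_k$. Set $J = I + (w)$. The idea is to describe $J_k$ and $(\mideal J)_k$ explicitly, so that the second isomorphism theorem produces a quotient only involving $I$, together with an intersection that can be analyzed using the injectivity hypothesis on multiplication by $w$.

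Concretely, I would first record the graded decompositions
$$J_k = I_k + wS_{k-1}, \qquad (\mideal J)_k = (\mideal I)_k + wS_{k-1},$$
where the second equality follows from $\mideal J = \mideal I + w\mideal$ together with $\mideal_{k-1} = S_{k-1}$; this last identity is exactly where the hypothesis $k\geq 2$ enters. Applying the second isomorphism theorem then gives
$$(J/\mideal J)_k \; \cong \; I_k \big/ \big( I_k \cap ((\mideal I)_k + wS_{k-1}) \big),$$
so the statement $\mu_k(J) = \mu_k(I)$ reduces to proving the key identity
$$I_k \cap ((\mideal I)_k + wS_{k-1}) = (\mideal I)_k.$$

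The containment $\supseteq$ is immediate. For the reverse, take $f = g + wh$ with $f \in I_k$, $g \in (\mideal I)_k$ and $h \in S_{k-1}$. Then $wh = f-g$ lies in $I_k$, so the class of $h$ in $(S/I)_{k-1}$ is killed by multiplication by $w$. The injectivity hypothesis forces $h \in I_{k-1}$, whence $wh \in S_1\cdot I_{k-1} \subseteq (\mideal I)_k$, and therefore $f \in (\mideal I)_k$, as desired.

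The main subtlety is simply bookkeeping about where the hypothesis $k \geq 2$ is used: it guarantees $\mideal_{k-1} = S_{k-1}$, so that all of $wS_{k-1}$ is already contained in $\mideal J$ and thus cancels on both sides of the quotient. Everything else is a routine graded calculation once the injectivity of $\times w$ is invoked to pull $h$ back into $I$.
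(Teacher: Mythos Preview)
Your proposal is correct and follows essentially the same route as the paper's proof: the key step in both is that if $f\in I_k$ decomposes as $g+wh$ with $g\in(\mideal I)_k$, then $wh\in I_k$ forces $h\in I_{k-1}$ by the injectivity hypothesis, whence $f\in(\mideal I)_k$. The only cosmetic difference is packaging --- the paper separates off the easy inequality $\mu_k(I)\geq\mu_k(I+(w))$ and then checks that a basis of $(I/\mideal I)_k$ stays independent modulo $\mideal(I+(w))$, whereas you compute $(J/\mideal J)_k$ directly via the second isomorphism theorem and reduce to the subspace identity $I_k\cap((\mideal I)_k+wS_{k-1})=(\mideal I)_k$; the underlying argument is identical.
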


\begin{proof}
It is clear that $\mu_k(I) \geq \mu_k(I+(w))$ for $k \geq 1$
even without injectivity assumption.
We show $\mu_k(I) \leq \mu_k(I+(w))$.
Let $\sigma_1,\dots,\sigma_t \in I$ be  elements of degree $k$ which are linearly independent in $I/\mideal I$.
What we must prove is that they are also linearly independent in $(I+(w))/\mideal(I+(w))$.

Let $\tau =\lambda_1 \sigma_1+ \cdots + \lambda_t \sigma_t \in \mideal(I+(w))$, where $\lambda_1,\dots,\lambda_t \in \kk$.
We claim $\tau \in \mideal I$.
Indeed, if $\tau \not \in \mideal I$ then there are $\rho'\in \mideal I$ and $\rho'' \not \in I$ such that
$\tau=\rho'+w \rho''$,
which implies $\rho''$ is in the kernel of the multiplication map
$ \times w : (S/I)_{k-1} \to (S/I)_k$, contradicting the assumption.
\end{proof}

\begin{lemma}
\label{2.3}
For a homogeneous ideal $I \subset S$,
if $(S/I)_j = 0$ for some $j \geq 0$ then $\mu_k(I)=0$ for $k \geq j+1$.
\end{lemma}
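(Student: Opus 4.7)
The plan is short: the hypothesis $(S/I)_j=0$ means $I_j=S_j$, and we propagate this upward.

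First I would note that for every $k\geq j$ we have $I_k=S_k$. Indeed, $S_k=S_{k-j}\cdot S_j$ (since a monomial of degree $k$ factors as a monomial of degree $k-j$ times a monomial of degree $j$), and $S_j=I_j\subseteq I$, so $S_k\subseteq I_k\subseteq S_k$.

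Second, for $k\geq j+1$ we also have $k-1\geq j$, so by the previous step $I_{k-1}=S_{k-1}$. Hence
\[
I_k=S_k=\mideal\cdot S_{k-1}=\mideal\cdot I_{k-1}\subseteq (\mideal I)_k.
\]
Therefore $(I/\mideal I)_k=0$, i.e.\ $\mu_k(I)=0$, as desired.

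There is no real obstacle here; the argument is two lines, and the only thing to watch is the base case $k=j$ (where $S_k=S_j$ trivially equals $I_j$, and the displayed computation only kicks in from $k=j+1$ onward, which is exactly what the statement requires).
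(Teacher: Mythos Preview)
Your proof is correct and follows essentially the same approach as the paper's: both observe that $I_k=S_k$ (equivalently $I_k=\mideal_k$) for all $k\geq j$ and deduce from this that $I$ needs no minimal generators in degrees $\geq j+1$. Your version simply makes explicit the step $I_k=\mideal\cdot I_{k-1}\subseteq(\mideal I)_k$ that the paper compresses into the line ``$\mu_k(I)=\mu_k(\mideal)=0$.''
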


\begin{proof}
Since $(S/I)_j=0$, we have $I_k = \mideal_k$ for $k \geq j$.
Thus $\mu_k(I)=\mu_k(\mideal)=0$ for $j \geq k+1$.
\end{proof}

The following statement appears in \cite[Corollary 2.5 and Theorem 4.3]{Sc}.

\begin{lemma}[{Schenzel}]
\label{2.4}
Let $\Delta$ be a Buchsbaum simplicial complex of dimension $d-1$,
$R=\kk [\Delta]$,
$\Theta=\theta_1,\dots,\theta_d$ an l.s.o.p.\ of $\kk [\Delta]$,
and let $\mathcal K(i)$ be the kernel of
$$\times \theta_i: R/(\theta_1,\dots,\theta_{i-1})R \to  R/(\theta_1,\dots,\theta_{i-1})R.$$
Then 
$\dim_\kk \mathcal K(i)_j=  \binom {i-1} j \beta_{j-1}(\Delta)$ for all $i$ and $j$.
\end{lemma}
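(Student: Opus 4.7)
The plan is to identify $\mathcal{K}(i)$ with the zeroth local cohomology module $H^0_\mideal(R_i)$, where $R_i := R/(\theta_1,\dots,\theta_{i-1})R$, and then compute its graded Hilbert function by induction on $i$, using Hochster's formula for the base case and a Pascal-type recurrence arising from the short exact sequence $0 \to \theta_i R_i \to R_i \to R_{i+1} \to 0$.

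The key preliminary observation is $\mathcal{K}(i) = H^0_\mideal(R_i)$. This rests on two standard facts from the theory of Buchsbaum modules: (i) since $R = \kk[\Delta]$ is Buchsbaum and $\theta_1,\dots,\theta_d$ is an l.s.o.p., each quotient $R_i$ is again Buchsbaum of dimension $d-i+1$; and (ii) for a Buchsbaum module $M$ and any parameter $\theta$ one has $0 :_M \theta = H^0_\mideal(M)$, because $\mideal$ annihilates $H^0_\mideal(M)$ (giving $H^0_\mideal(M) \subseteq 0 :_M \theta$) and parameters on a Buchsbaum module form weak $M$-sequences (giving the reverse inclusion). Applied with $M = R_i$ and $\theta = \theta_i$, this identifies $\mathcal{K}(i)$ with $H^0_\mideal(R_i)$ and reduces the problem to a computation of graded local cohomology.

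To obtain the Pascal-type recurrence, I would split the four-term exact sequence $0 \to \mathcal{K}(i)(-1) \to R_i(-1) \xrightarrow{\theta_i} R_i \to R_{i+1} \to 0$ and take local cohomology. Since $\mideal \mathcal{K}(i) = 0$, the higher local cohomology of $\mathcal{K}(i)$ vanishes, which forces $H^j_\mideal(\theta_i R_i) \cong H^j_\mideal(R_i)(-1)$ for $j \geq 1$ and $H^0_\mideal(\theta_i R_i) = 0$. Feeding these into the long exact sequence from $0 \to \theta_i R_i \to R_i \to R_{i+1} \to 0$, and using Buchsbaumness once more to kill the connecting multiplications $\theta_i : H^k_\mideal(R_i)(-1) \to H^k_\mideal(R_i)$ for $k < d-i+1$, the long exact sequence breaks into short exact pieces
$$0 \to H^k_\mideal(R_i) \to H^k_\mideal(R_{i+1}) \to H^{k+1}_\mideal(R_i)(-1) \to 0 \qquad (k + i \leq d - 1).$$
The remainder is a one-step induction on $i$ of the stronger claim $\dim_\kk H^k_\mideal(R_i)_j = \binom{i-1}{j}\beta_{j+k-1}(\Delta)$ for $k+i \leq d$: the base case $i=1$ is Hochster's formula (for a Buchsbaum complex, links of non-empty faces are Cohen--Macaulay, so $H^k_\mideal(\kk[\Delta])$ is concentrated in degree $0$ with dimension $\beta_{k-1}(\Delta)$ for $k < d$), and the inductive step is just Pascal's identity $\binom{i-1}{j} + \binom{i-1}{j-1} = \binom{i}{j}$. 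Setting $k = 0$ yields the lemma.

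The main obstacle is verifying that each multiplication $\theta_i : H^k_\mideal(R_i)(-1) \to H^k_\mideal(R_i)$ vanishes, since without this the long exact sequence does not split into short pieces and the recurrence collapses. This is exactly the work done by the Buchsbaum hypothesis, and is what forces one to develop (or cite) the full local-cohomological theory of Buchsbaum rings rather than relying on Cohen--Macaulay methods alone.
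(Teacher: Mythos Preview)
Your argument is correct and is essentially the standard local-cohomological derivation due to Schenzel. Note, however, that the paper does not supply its own proof of Lemma~\ref{2.4}: it simply cites \cite[Corollary~2.5 and Theorem~4.3]{Sc}, so there is no in-paper argument to compare against. Your write-up in fact reconstructs Schenzel's computation: identify $\mathcal K(i)=H^0_\mideal(R_i)$ via the weak-sequence property of parameters on Buchsbaum modules, then push the short exact sequence $0\to\theta_iR_i\to R_i\to R_{i+1}\to 0$ through local cohomology, using $\mideal\cdot H^k_\mideal(R_i)=0$ for $k<\dim R_i$ to split the long exact sequence and obtain the Pascal recurrence. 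One small point worth making explicit is why each $R_i$ is again Buchsbaum: any system of parameters $y_1,\dots,y_{d-i+1}$ of $R_i$ can be prepended by $\theta_1,\dots,\theta_{i-1}$ to give a system of parameters of $R$, and the weak-sequence condition for the latter immediately yields it for the former; this justifies both the identification $\mathcal K(i)=H^0_\mideal(R_i)$ and the vanishing of the connecting multiplications at every stage of the induction.
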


\begin{proposition}
\label{2.5}
Let $\Delta$ be a Buchsbaum simplicial complex of dimension $d-1$.
If $h_r''(\Delta)=0$ then
\begin{itemize}
\item[(i)] $\beta_k(\Delta)=0$ for $k \geq r$.
\item[(ii)] $\Delta$ has no missing faces of dimension $ \geq r+1$.
\end{itemize}
\end{proposition}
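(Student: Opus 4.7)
The plan is to apply Lemma 2.1 to the Stanley--Reisner ring of $\Delta$ modulo an l.s.o.p., then combine the resulting socle information with the injectivity criterion of Lemma 2.2 and the kernel computation of Lemma 2.4.

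First I would set $R = S/(I_\Delta + (\Theta))$ and invoke Lemma 2.1(ii) to produce a graded ideal $N \subseteq \Soc(R)$ with $F_{R/N}(t) = \sum_{i=0}^d h_i''(\Delta) t^i$. The hypothesis $h_r''(\Delta) = 0$ then gives $(R/N)_r = 0$, which forces $R_r = N_r \subseteq \Soc(R)$. The key observation is that $\mideal R_r = 0$ together with the standard grading of $R$ forces $R_j = 0$ for every $j \geq r+1$, because $R_j = R_1 \cdots R_1 \cdot R_r$. By Lemma 2.1(i), this means $h_j'(\Delta) = 0$ for $j \geq r+1$, and by the same socle propagation $(R/N)_j = 0$ for all $j \geq r$, i.e.\ $h_j''(\Delta) = 0$ for $r \leq j \leq d-1$.

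Part (i) is then a bookkeeping consequence. For $r+1 \leq j \leq d-1$, the identity $h_j''(\Delta) = h_j'(\Delta) - \binom{d}{j}\beta_{j-1}(\Delta)$ combined with the simultaneous vanishing of $h_j'$ and $h_j''$ yields $\beta_{j-1}(\Delta) = 0$. The boundary case $\beta_{d-1}(\Delta) = h_d'(\Delta) = 0$ follows directly from $R_d = 0$, and Betti numbers in degrees $\geq d$ vanish automatically from $\dim\Delta = d-1$. This gives $\beta_k(\Delta) = 0$ for all $k \geq r$.

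For part (ii), set $J_0 = I_\Delta$ and $J_i = I_\Delta + (\theta_1,\dots,\theta_i)$. Lemma 2.4 identifies the kernel of $\times \theta_i$ on $(S/J_{i-1})_{k-1}$ as a space of dimension $\binom{i-1}{k-1}\beta_{k-2}(\Delta)$, and for $k \geq r+2$ the Betti factor vanishes by part (i); hence each multiplication in the relevant degree is injective. Iterating Lemma 2.2 for $i = 1,\dots,d$ then gives $\mu_k(I_\Delta) = \mu_k(J_d) = \mu_k(I_\Delta + (\Theta))$ for all $k \geq r+2$. Since $(S/(I_\Delta+(\Theta)))_{r+1} = R_{r+1} = 0$, Lemma 2.3 concludes $\mu_k(I_\Delta + (\Theta)) = 0$ for $k \geq r+2$, so $\Delta$ has no missing face of dimension $\geq r+1$. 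The one genuinely subtle step I expect is the socle argument upgrading $(R/N)_r = 0$ to $R_j = 0$ for $j \geq r+1$: without it part (i) fails in the correct range, and then the crucial vanishing $\beta_{k-2}(\Delta) = 0$ needed to keep the iteration of Lemma 2.2 alive in part (ii) would not be available.
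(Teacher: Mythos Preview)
Your proof is correct and follows essentially the same route as the paper's. Both arguments hinge on the observation that $h_r''(\Delta)=0$ forces $R_r\subseteq\Soc(R)$ and hence $R_j=0$ for $j\geq r+1$; from there both deduce the Betti vanishing, then iterate Lemmas~2.2 and~2.4 along the l.s.o.p.\ and finish with Lemma~2.3. The only cosmetic difference is in part~(i): the paper reads off $\beta_k(\Delta)=0$ directly from the Novik--Swartz inequality $\dim_\kk R_{k+1}\geq\binom{d}{k+1}\beta_k(\Delta)$, whereas you obtain it from the simultaneous vanishing of $h_j'$ and $h_j''$ via $h_j''=h_j'-\binom{d}{j}\beta_{j-1}$; the content is the same.
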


\begin{proof}
Let $\Theta$ be an l.s.o.p.\ of $\kk[\Delta]$.
Since $h_r''(\Delta)=0$, by
Lemma \ref{2.1}(ii)
all elements in
$S/(I_\Delta+(\Theta))$ of degree $r$ are contained in the socle of $S/(I_\Delta+(\Theta))$.
This fact implies
\begin{align}
\label{2-1}
S/(I_\Delta+(\Theta))_k=0\ \ \mbox{ for all }k \geq r+1.
\end{align}
Then since $\dim_\kk (S/(I_\Delta+(\Theta)))_k \geq \binom d k \beta_{k-1}(\Delta)$
by Lemma \ref{2.1}(ii),
we have $\beta_k(\Delta)=0$ for $k \geq r$, proving (i).
Moreover, this fact and Lemmas \ref{2.2} and \ref{2.4} show 
$\mu_k(I_\Delta)=\mu_k(I_\Delta+(\Theta))$ for $k \geq r+1$.
Since $S/(I_\Delta+(\Theta))_{r+1}=0$ by \eqref{2-1},
the statement (ii) follows from Lemma \ref{2.3}.
\end{proof}

\section{Stacked manifolds with boundary}

In this section, we study $r$-stacked manifolds with boundary.
Recall that
a homology $d$-manifold with boundary is said to be {\em $r$-stacked} if it has no interior faces of dimension $\leq d-r-1$
and that a homology manifold without boundary is said to be {\em $r$-stacked} if it is the boundary of an $r$-stacked homology manifold with boundary.
For a simplicial complex $\Delta$ of dimension $d-1$, let
$$g_i(\Delta)= h_i(\Delta)-h_{i-1}(\Delta)$$
for $i=0,1,\dots,d$.

\subsection*{Enumerative criterion}
It is known that a homology ball $\Delta$ is $(r-1)$-stacked if and only if
$h_r(\Delta)=0$.
See \cite[Proposition 2.4]{Mc}.
We first extend this property for stacked manifolds.

Let $\Delta$ be a homology $(d-1)$-manifold with boundary.
Then the Dehn--Sommerville relations for homology manifolds with boundary \cite[Corollary 2.2]{Gr} say
\begin{align}
\label{d1}
g_i(\partial \Delta)
&= h_i(\Delta)-h_{d-i}(\Delta)+ \binom d i (-1)^{d-1-i} \widetilde \chi (\Delta)
\end{align}
where $\widetilde \chi(\Delta)= \sum_{k=-1}^{d-1} (-1)^k f_k(\Delta)$
is the reduced Euler characteristic.
By substituting $h_{d-i}(\Delta)= h_{d-i}''(\Delta) + \binom d i \sum_{k=1}^{d-i} (-1)^{d-i-k} \beta_{k-1}(\Delta)$ and
$\widetilde \chi(\Delta)=\sum_{k=0}^{d-1} (-1)^k \beta_k(\Delta)$
to \eqref{d1},
we obtain
\begin{align}
\label{d2}
g_i(\partial \Delta)
&= h_i(\Delta)-h''_{d-i}(\Delta)+ \binom d i \sum_{k=d-i}^{d-1} (-1)^{d-1-i-k} \beta_k(\Delta).
\end{align}

\begin{theorem}
\label{3.1}
Let $1 \leq r \leq d$ and let $\Delta$ be a homology $(d-1)$-manifold with boundary.
Then $\Delta$ is $(r-1)$-stacked if and only if $h_r''(\Delta)=0$.
\end{theorem}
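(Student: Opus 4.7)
The plan is to recast both conditions in terms of interior face data via an ``interior $h$-vector'' and to connect them through an explicit identity. Define $f^\circ_j(\Delta)$ as the number of interior $j$-faces of $\Delta$ (with $f^\circ_{-1}:=0$) and set
\[
h^\circ_i(\Delta) := \sum_{j=0}^i (-1)^{i-j}\binom{d-j}{i-j}\,f^\circ_{j-1}(\Delta).
\]
Since the transform $f^\circ \leftrightarrow h^\circ$ is lower-triangular with $1$'s on the diagonal, $\Delta$ is $(r-1)$-stacked iff $h^\circ_i(\Delta)=0$ for every $i\le d-r$. The proof then reduces to the identity
\[
h''_j(\Delta) \;=\; h^\circ_{d-j}(\Delta) \,+\, \binom{d}{j}\sum_{k=j}^{d-1}(-1)^{j-1-k}\,\beta_k(\Delta), \qquad 1 \le j \le d, \qquad (\ast)
\]
obtained by rearranging \eqref{d2} at $i=d-j$, once one verifies $h_i(\Delta)-g_i(\partial\Delta)=h^\circ_i(\Delta)$. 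The latter is a short generating-function calculation: using $f_k(\Delta)=f_k(\partial\Delta)+f^\circ_k$, one finds $\sum_i \bigl(h_i(\Delta)-h^\circ_i(\Delta)\bigr)t^i = (1-t)\sum_i h_i(\partial\Delta)t^i = \sum_i g_i(\partial\Delta)t^i$.

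For the direction $h''_r(\Delta)=0 \Rightarrow \Delta$ is $(r-1)$-stacked, set $R=\kk[\Delta]/(\Theta)$ with $\Theta$ a generic l.s.o.p. By Lemma~\ref{2.1}(ii), the hypothesis forces $R_r = N_r \subset \Soc(R)$, so $\mideal \cdot R_r=0$ and, since $R$ is generated in degree $1$ over $\kk$, $R_k=0$ for all $k\ge r+1$; hence $h''_k(\Delta)=0$ for all $k\ge r$. By Proposition~\ref{2.5}(i), $\beta_k(\Delta)=0$ for all $k\ge r$. Plugging these vanishings into $(\ast)$ at each $j\ge r$ kills the Betti sum and leaves $h^\circ_{d-j}(\Delta)=0$; equivalently, $h^\circ_i(\Delta)=0$ for every $i\le d-r$, so $f^\circ_j(\Delta)=0$ for $j\le d-r-1$ and $\Delta$ is $(r-1)$-stacked.

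For the converse, assume $\Delta$ is $(r-1)$-stacked, so $h^\circ_i(\Delta)=0$ for $i\le d-r$. The case $r=d$ is immediate since $h''_d(\Delta)=\beta_{d-1}(\Delta)=0$ for manifolds with boundary and every complex is vacuously $(d-1)$-stacked; for $r\le d-1$, proceed by descending induction on $j\in\{d-1,d-2,\ldots,r\}$, proving $\beta_j(\Delta)=h''_j(\Delta)=0$ at each stage. At step $j$, the induction hypothesis $\beta_k(\Delta)=0$ for $j<k\le d-1$ reduces the Betti sum in $(\ast)$ to its $k=j$ term, giving
\[
h''_j(\Delta) = -\binom{d}{j}\beta_j(\Delta),
\]
so non-negativity of both $h''_j(\Delta)$ and $\beta_j(\Delta)$ forces each to vanish. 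Setting $j=r$ yields $h''_r(\Delta)=0$. The only real technical point is the clean verification of the identity $(\ast)$; once that is in place, both directions follow by straightforward algebraic manipulations, and the converse elegantly avoids any recourse to Poincar\'e--Lefschetz duality.
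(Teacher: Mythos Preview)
Your proof is correct and follows essentially the same route as the paper. Your ``interior $h$-vector'' $h^\circ_i(\Delta)$ is exactly the quantity $h_i(\Delta)-g_i(\partial\Delta)$ that the paper uses (the paper shows $(r-1)$-stackedness is equivalent to $g_i(\partial\Delta)=h_i(\Delta)$ for $i\le d-r$, which is your $h^\circ_i=0$), and your identity $(\ast)$ is the paper's rearranged Dehn--Sommerville relation \eqref{d2}; both directions then proceed identically via the socle argument, Proposition~\ref{2.5}, and the same descending induction.
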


\begin{proof}
We first prove that $\Delta$ is $(r-1)$-stacked if and only if $g_i(\partial \Delta)=h_i(\Delta)$ for all $i \leq d-r$.
Indeed, it is clear that $\Delta$ is $(r-1)$-stacked if and only if $f_i(\partial \Delta)=f_i(\Delta)$ for all $i \leq d-r-1$.
Consider the equations
\begin{align}
\label{hg2} \sum_{i=0}^{d} f_{i-1}(\Delta) t^i 
=\sum_{i=0}^{d} h_i(\Delta) t^i (t+1)^{d-i}
\end{align}
and
\begin{align}
\label{hg1}
\sum_{i=0}^{d-1} f_{i-1}(\partial \Delta) t^i 
&=\sum_{i=0}^{d-1} h_i(\partial \Delta) t^i (t+1)^{d-i-1}\\
\nonumber &=\sum_{i=0}^{d-1} h_i(\partial \Delta) \{ t^i (t+1)^{d-i} - t^{i+1} (t+1)^{d-(i+1)}\}.
\end{align}
By comparing the coefficients of the polynomials in \eqref{hg2} and \eqref{hg1},
we conclude that $f_i(\partial \Delta)=f_i(\Delta)$ for $i \leq d-r-1$ if and only if $h_i(\partial \Delta)-h_{i-1}(\partial \Delta)=h_i(\Delta)$ for all $i \leq d-r$.

We first prove the `if' part.
Suppose $h_r''(\Delta)=0$.
Then we have $h_k''(\Delta)=0$ for all $k \geq r$ (see (1) in the proof of Proposition \ref{2.5}).
Also, 
$\beta_r(\Delta)= \cdots = \beta_{d-1}(\Delta)=0$
by Proposition \ref{2.5}.
Then the Dehn--Sommerville relation \eqref{d2} shows
$$g_i(\partial \Delta)=h_i(\Delta)$$
for all $i \leq d-r$, as desired.

Next, we prove the `only if' part.
Suppose $g_i(\partial \Delta)=h_i(\Delta)$ for all $i \leq d-r$.
The Dehn--Sommerville relations \eqref{d2} imply
\begin{align}
\label{3a-1}
h_{d-i}''(\Delta)= - \binom d i \beta_{d-i}(\Delta) + \binom d i \sum_{k=d-i+1}^{d-1}(-1)^{d-1-i-k} \beta_k(\Delta)
\end{align}
for all $i \leq d-r$.
We show by induction on $i$ that $\beta_{d-i}(\Delta)=0$ and $h_{d-i}''(\Delta)=0$ for $i \leq d-r$:
The claim is clear for $i=0$ by \eqref{3a-1}.
For $i>0$, by induction the second summand on the right-hand side of \eqref{3a-1} vanish.
Thus $h_{d-i}''(\Delta)= - \binom d i \beta_{d-i}(\Delta)$.
Since $h''$-vectors and Betti numbers are non-negative we have $h_{d-i}''(\Delta)=\beta_{d-i}(\Delta)=0$.
\end{proof}

\subsection*{Vanishing of missing faces}
If $\Delta$ is an $(r-1)$-stacked triangulated ball then $\Delta$ is Cohen--Macaulay and $h_r(\Delta)=0$.
These facts and Lemmas \ref{2.2} and \ref{2.3} say that $\Delta$ has no missing faces of dimension $ \geq r$
(another proof of this fact was given in \cite[Lemma 2.10]{BD3}).
Proposition \ref{2.5} and Theorem \ref{3.1} prove an analogue of this fact for manifolds.

\begin{corollary}
\label{3.3}
Let $\Delta$ be an $(r-1)$-stacked homology manifold with boundary.
Then
\begin{itemize}
\item[(i)] $\beta_k(\Delta)=0$ for $k \geq r$.
\item[(ii)] $\Delta$ has no missing $k$-faces of dimension $\geq r+1$.
\end{itemize}
\end{corollary}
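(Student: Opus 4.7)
The plan is simply to combine Theorem \ref{3.1} with Proposition \ref{2.5}, which together already encode everything needed. First I would invoke Theorem \ref{3.1}: the hypothesis that $\Delta$ is an $(r-1)$-stacked homology $d$-manifold with boundary is equivalent to the single algebraic condition $h''_r(\Delta)=0$. This converts the combinatorial/topological hypothesis into a statement purely about the $h''$-vector, where Proposition \ref{2.5} is ready to operate.

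Second, I would check that the hypotheses of Proposition \ref{2.5} are met: a homology manifold with boundary is in particular Buchsbaum, since by definition all of its vertex links are $\kk$-homology spheres or $\kk$-homology balls, both of which are Cohen--Macaulay (this was already noted in Section 2). Applying Proposition \ref{2.5} with this value of $r$ then yields, in a single step, both desired conclusions: part (i) that $\beta_k(\Delta)=0$ for $k\geq r$, and part (ii) that $\Delta$ has no missing $k$-faces for $k\geq r+1$.

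There is essentially no obstacle, since the real work has already been done upstream. The genuine content sits in Theorem \ref{3.1} (which relies on the Dehn--Sommerville relation \eqref{d2} together with Proposition \ref{2.5}(i)) and in Proposition \ref{2.5} itself (which leverages the socle bound of Lemma \ref{2.1}(ii) and then Lemmas \ref{2.2}--\ref{2.4} to pass from the vanishing of $(S/(I_\Delta+(\Theta)))_k$ to the vanishing of minimal generators of $I_\Delta$ in the relevant degrees). If one preferred a more self-contained write-up, one could unroll the argument: from $h''_r(\Delta)=0$ conclude that $(S/(I_\Delta+(\Theta)))_k=0$ for all $k\geq r+1$, deduce the Betti vanishing via Lemma \ref{2.1}(ii), and then deduce the missing-face vanishing by using Lemma \ref{2.2} (applied successively to the linear forms $\theta_1,\dots,\theta_d$, with injectivity of multiplication controlled by Lemma \ref{2.4} in degrees $\geq r+1$, where all relevant Betti numbers vanish) together with Lemma \ref{2.3}. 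For writing the corollary, however, the cleanest presentation is a one-sentence appeal to the two prior results.
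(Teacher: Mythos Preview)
Your proposal is correct and matches the paper's approach exactly: the paper states (immediately before Corollary~\ref{3.3}) that ``Proposition~\ref{2.5} and Theorem~\ref{3.1} prove an analogue of this fact for manifolds,'' and gives no further argument. One small imprecision: the vertex links of a homology manifold with boundary are not literally guaranteed to be homology balls or spheres by the definition, but they are Cohen--Macaulay by Reisner's criterion (as the paper notes in Section~2), so the Buchsbaum hypothesis of Proposition~\ref{2.5} is indeed satisfied.
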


Finally,  we give a few known examples of stacked manifolds.

\begin{example}[K\"uhnel--Lassmann construction \cite{Ku,KL}]
\label{ex1}
Let $K_{d,n}$ be the simplicial complex on $[n]$ generated by the facets
$$\{ \{i,i+1,\dots,i+d-1\}: i=1,2,\dots,n\},$$
where $i+k$ means $i+k-n$ if $i+k >n$.
If $n \geq 2d-1$ then $K_{d,n}$ is a homology manifold whose boundary 
triangulates either $S^1 \times S^{d-3}$ or a non-orientable $S^{d-3}$-bundle over $S^1$ \cite{KL}.
Since the interior faces of $K_{d,n}$
are $\{i,i+1,\dots,i+d-2\}$ for $i=1,2,\dots,n$,
the simplicial complex $K_{d,n}$ is $1$-stacked and has the $h''$-vector $(1,n-d,0,\dots,0)$.
\end{example}

\begin{example}[Klee--Novik construction \cite{KN}]
Let $X=\{x_1,\dots,x_d\}$ and $Y=\{y_1,\dots,y_d\}$ be disjoint sets.
For integers $0 \leq i \leq d-2$, let $B_{d,i}$ be the simplicial complex on the vertex set $X \cup Y$
generated by the facets
$$ \{ \{ z_1,\dots,z_d\}: z_i \in \{x_i,y_i\} \mbox{ and } \# \{k: \{z_k,z_{k+1}\} \not \subset X \mbox{ and } \{z_k,z_{k+1}\} \not \subset Y \} \leq i\}.$$
The simplicial complex $B_{d,i}$ is a combinatorial manifold whose boundary triangulates $S^i \times S^{d-i-2}$
and its $h''$-vector is given by $h_k''(B_{d,i})= \binom d k$ for $k \leq i$
and $h''_{i+1}(B_{d,i})=0$ \cite[Proposition 5.1]{KN}.
In particular, these triangulated manifolds are $i$-stacked by Theorem \ref{3.1}.
\end{example}

\begin{remark}
\label{3.4}
If $\Delta$ is an $(r-1)$-stacked triangulated ball then $\Delta$ has no missing $r$-faces.
However, an $(r-1)$-stacked homology manifold with boundary could have missing $r$-faces.
Indeed, the simplicial complex $K_{4,7}$ in Example \ref{ex1} is $1$-stacked but has a missing face $\{1,4,7\}$.
\end{remark}

\section{Stacked manifolds without boundary}

In Sections 4 and 5,
we study $(r-1)$-stacked $(d-1)$-manifold without boundary with $r \leq \frac d 2$.
In this section, we study these manifolds from combinatorial viewpoints.

\subsection*{Uniqueness of stacked manifolds}
A homology $d$-manifold $\Delta$ with boundary is said to be a {\em ($\kk$-)homology $d$-ball} if
$\widetilde H_k(\Delta;\kk)=0$ for all $k$ and $\partial \Delta$ is a ($\kk$-)homology $(d-1)$-sphere.
For a simplicial complex $\Delta$ on $[n]$,
let
$$\Delta(r)=\{ F \subset [n]: \mathrm{skel}_r(F) \subset \Delta\},$$
where $\mathrm{skel}_r(F)=\{ G \subset F:  \#G \leq r+1\}$ is the {\em $r$-skeleton} of $F$.
This simplicial complex can be defined algebraically.
For a homogeneous ideal $I \subset S$,
let $I_{\leq k}$ be the ideal generated by all elements in $I$ of degree $\leq k$.
Then  it is easy to see that
$(I_\Delta)_{\leq r+1} = I_{\Delta(r)}$.

For an $(r-1)$-stacked homology $(d-1)$-sphere $\Delta$,
it was shown by McMullen \cite[Theorem 3.3]{Mc} (for polytopes) and by Bagchi and Datta \cite[Proposition 2.10]{BD} (for triangulated spheres)
that an $(r-1)$-stacked homology $d$-ball $\Sigma$ satisfying $\partial \Sigma=\Delta$ is unique.
Moreover, the following result was shown in  \cite [Corollary 3.6]{BD} (for polytopes) and in \cite[Lemma 2.1 and Theorem 2.3]{MN} (for homology spheres) by a different approach.

\begin{lemma}
\label{4.2}
Let $1 \leq r \leq \frac {d+1} 2$ and
$\Delta$ an $(r-1)$-stacked homology $(d-1)$-sphere.
If $\Sigma$ is an $(r-1)$-stacked homology $d$-ball with $\partial \Sigma=\Delta$ then $\Sigma=\Delta(r-1)$.
\end{lemma}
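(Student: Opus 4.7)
The plan is to reformulate the desired equality $\Sigma = \Delta(r-1)$ at the level of Stanley--Reisner ideals as $I_\Sigma = (I_\Delta)_{\leq r}$ (since $I_{\Delta(r-1)}=(I_\Delta)_{\leq r}$ by definition) and to establish both inclusions. For $I_{\Delta(r-1)} \subseteq I_\Sigma$, equivalently $\Sigma \subseteq \Delta(r-1)$, I would argue directly: given $F \in \Sigma$ and any $G \subseteq F$ with $\#G \leq r$, we have $\dim G \leq r-1 \leq d-r$, where the last inequality uses the hypothesis $r \leq \frac{d+1}{2}$. By $(r-1)$-stackedness, $G$ cannot be an interior face of $\Sigma$, so $G \in \partial \Sigma = \Delta$. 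Hence $\mathrm{skel}_{r-1}(F) \subseteq \Delta$ and $F \in \Delta(r-1)$.

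For the reverse inclusion $I_\Sigma \subseteq (I_\Delta)_{\leq r}$, note that $I_\Sigma \subseteq I_\Delta$ (since $\Delta \subseteq \Sigma$), so it suffices to show that $I_\Sigma$ has no minimal generator of degree $\geq r+1$, i.e., $\Sigma$ has no missing face of dimension $\geq r$. This is where I would exploit the Cohen--Macaulay structure of $\Sigma$, which follows from its being a homology ball (each link has reduced homology concentrated in the top degree). The vanishing of the Betti numbers of $\Sigma$ together with Theorem~\ref{3.1} yield $h_r(\Sigma) = h_r''(\Sigma) = 0$. Choose an l.s.o.p.\ $\Theta = \theta_1,\dots,\theta_d$ of $\kk[\Sigma]$. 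By Cohen--Macaulayness, $\Theta$ is a regular sequence on $\kk[\Sigma]$, so multiplication by $\theta_i$ is injective in every degree on $\kk[\Sigma]/(\theta_1,\dots,\theta_{i-1})$. Iterating Lemma~\ref{2.2} then gives $\mu_k(I_\Sigma) = \mu_k(I_\Sigma + (\Theta))$ for all $k \geq 2$. The Artinian quotient $A = S/(I_\Sigma + (\Theta))$ has Hilbert function $h(\Sigma)$ and is generated in degree $1$ over $\kk$; thus $A_r = 0$ forces $A_i = 0$ for all $i \geq r$. Lemma~\ref{2.3} now yields $\mu_k(I_\Sigma + (\Theta)) = 0$ for $k \geq r+1$, and combining gives $\mu_k(I_\Sigma) = 0$ for $k \geq r+1$, completing the proof.

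The main obstacle is precisely this sharper vanishing of $\mu_r(I_\Sigma)$, one degree stronger than what Corollary~\ref{3.3}(ii) provides for arbitrary $(r-1)$-stacked manifolds with boundary. Indeed, Remark~\ref{3.4} exhibits the $1$-stacked manifold $K_{4,7}$ with a genuine missing $2$-face, so no such conclusion holds in the Buchsbaum (non-ball) setting. What makes the ball case work is the full Cohen--Macaulayness of $\Sigma$, which promotes the l.s.o.p.\ to a regular sequence and permits Lemma~\ref{2.2} to be applied at every stage of the reduction modulo $\Theta$.
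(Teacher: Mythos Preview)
Your proof is correct and follows essentially the same approach as the paper: both hinge on showing that $\Sigma$ has no missing faces of dimension $\geq r$ via Cohen--Macaulayness, $h_r(\Sigma)=0$, and Lemmas~\ref{2.2}--\ref{2.3} (the paper simply cites this as ``the discussion before Corollary~\ref{3.3}'' rather than spelling it out), and both use that $\Sigma$ and $\Delta$ share the same $(d-r)$-skeleton to match the low-degree parts of the Stanley--Reisner ideals. The only cosmetic difference is that the paper writes the argument as a chain of equalities $I_\Sigma=(I_\Sigma)_{\leq r}=(I_\Delta)_{\leq r}=I_{\Delta(r-1)}$, whereas you split it into two inclusions.
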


\begin{proof}
Observe that $\Sigma$ has no missing faces of dimension $ \geq r$ (see the discussion before Corollary \ref{3.3}).
Then we have $I_\Sigma=(I_\Sigma)_{\leq r}$.
Since $\Sigma$ and $\Delta$ have the same $(d-r)$-skeleton and $r -1 \leq d-r$,
we have $(I_\Sigma)_{\leq r} = (I_\Delta)_{\leq r}$.
Hence
$$
I_\Sigma=(I_\Sigma)_{\leq r} = (I_\Delta)_{\leq r}=I_{\Delta(r-1)},$$
which implies $\Sigma=\Delta(r-1)$.
\end{proof}

The following is an extension of Lemma \ref{4.2}.

\begin{theorem}
\label{4.3}
Let $1 \leq r  \leq \frac d 2$ and
$\Delta$ an $(r-1)$-stacked homology $(d-1)$-manifold without boundary.
If $\Sigma$ is an $(r-1)$-stacked homology $d$-manifold with $\partial \Sigma=\Delta$ then $\Sigma=\Delta(r)$.
\end{theorem}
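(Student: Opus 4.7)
The plan is to follow the template of Lemma~\ref{4.2}, with two adjustments caused by moving from balls and spheres to manifolds with and without boundary. In Lemma~\ref{4.2} the inputs were that an $(r-1)$-stacked homology ball has no missing faces of dimension $\geq r$ (so $I_\Sigma=(I_\Sigma)_{\leq r}$) and that $\Sigma$ and $\partial\Sigma$ share the $(d-r)$-skeleton. In the present setting the missing-face bound weakens by one degree, which is precisely compensated by comparing $\Sigma$ and $\Delta$ along the $r$-skeleton instead of the $(d-r)$-skeleton. The target is to identify $I_\Sigma$ with $I_{\Delta(r)}$ via the tautology $(I_\Delta)_{\leq r+1}=I_{\Delta(r)}$ recalled just before Lemma~\ref{4.2}.

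I would proceed in three short steps. First, since $\Sigma$ is an $(r-1)$-stacked homology $d$-manifold with boundary, Theorem~\ref{3.1} gives $h''_r(\Sigma)=0$, and then Corollary~\ref{3.3}(ii) (the content of Proposition~\ref{2.5}(ii) applied to $\Sigma$) yields that $\Sigma$ has no missing faces of dimension $\geq r+1$. Equivalently, every minimal generator of $I_\Sigma$ has degree at most $r+1$, so $I_\Sigma=(I_\Sigma)_{\leq r+1}$.

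Second, the assumption $r\leq d/2$ gives $r\leq d-r$. Since $\Sigma$ is $(r-1)$-stacked, each face of $\Sigma$ of dimension $\leq d-r$ lies in $\partial\Sigma=\Delta$, while conversely every face of $\Delta$ is already a face of $\Sigma$. Restricting to dimensions $\leq r$, the $r$-skeletons of $\Sigma$ and $\Delta$ coincide; in particular they share the same vertex set, and the squarefree monomials of degree $\leq r+1$ lying in $I_\Sigma$ are exactly those lying in $I_\Delta$. Hence $(I_\Sigma)_{\leq r+1}=(I_\Delta)_{\leq r+1}$.

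Combining the two steps with the identity $(I_\Delta)_{\leq r+1}=I_{\Delta(r)}$ gives $I_\Sigma=I_{\Delta(r)}$, and therefore $\Sigma=\Delta(r)$. The only genuine obstacle in this chain is the first step: the degree bound on missing faces of a stacked manifold with boundary, which is the substantive algebraic content (Proposition~\ref{2.5}(ii) together with Theorem~\ref{3.1}). Everything else amounts to translating ``$(r-1)$-stacked'' and the definition of $\Delta(r)$ into ideal-theoretic language.
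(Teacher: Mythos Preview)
Your proof is correct and follows essentially the same approach as the paper's own proof, which also invokes Corollary~\ref{3.3}(ii) to get $I_\Sigma=(I_\Sigma)_{\leq r+1}$ and then repeats the argument of Lemma~\ref{4.2} with the degree shift. Your write-up is in fact more detailed than the paper's, which simply says ``the statement follows in the same way as in the proof of Lemma~\ref{4.2}.''
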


\begin{proof}
Since $\Sigma$ is $(r-1)$-stacked,
by Corollary \ref{3.3}(ii), $\Sigma$ has no missing faces of dimension $\geq r+1$,
namely, $I_\Sigma=(I_\Sigma)_{\leq r+1}$.
Then the statement follows in the same way as in the proof of Lemma \ref{4.2}.
\end{proof}

\begin{remark}
We cannot replace $\Delta(r)$ by $\Delta(r-1)$ in Theorem \ref{4.3} by the same reason as in Remark \ref{3.4}.
Similarly, the  statement fails when $r=\frac {d+1} 2$.
\end{remark}

\subsection*{Vanishing of missing faces}
It was shown by Kalai \cite[Proposition 3.6]{Ka2} and Nagel \cite[Corollary 4.6]{Na} that if $\Delta$ is 
an $(r-1)$-stacked homology $(d-1)$-sphere and $r \leq \frac d 2$ then $\Delta$ has no missing $k$-faces for $r \leq k \leq d-r$
(they write statements only for polytopes but Nagel's proof works for homology spheres).
This fact can be generalized as follows.

\begin{theorem}
\label{4.1}
Let $1 \leq r < \frac d 2$ and let $\Delta$ be an $(r-1)$-stacked homology $(d-1)$-manifold without boundary.
Then
\begin{itemize}
\item[(i)] $\beta_k(\Delta)=0$ for $r \leq k \leq d-1-r$.
\item[(ii)] $\Delta$ has no missing $k$-faces with $ r+1 \leq k \leq d-r$.
\end{itemize}
\end{theorem}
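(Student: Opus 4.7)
The plan is to reduce Theorem \ref{4.1} to Corollary \ref{3.3} by lifting $\Delta$ to its $(r-1)$-stacked filling. Since $\Delta$ is $(r-1)$-stacked, there exists an $(r-1)$-stacked homology $d$-manifold $\Sigma$ with $\partial\Sigma=\Delta$ (by Theorem \ref{4.3} such $\Sigma$ is unique and equals $\Delta(r)$, but I will not need this uniqueness). Two facts about $\Sigma$ will drive the whole proof. First, because $\Sigma$ is $(r-1)$-stacked, every interior face of $\Sigma$ has dimension at least $d-r+1$, so every face of $\Sigma$ of dimension at most $d-r$ already lies in $\partial\Sigma=\Delta$; combined with the containment $\Delta\subseteq\Sigma$ this gives the skeleton equality $\mathrm{skel}_{d-r}(\Sigma)=\mathrm{skel}_{d-r}(\Delta)$. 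Second, Corollary \ref{3.3} applied to $\Sigma$ yields $\beta_k(\Sigma)=0$ for $k\geq r$ and no missing face of $\Sigma$ of dimension $\geq r+1$.

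To prove (ii) I would argue by contradiction. Let $F$ be a missing $k$-face of $\Delta$ with $r+1\leq k\leq d-r$. By minimality every proper subset of $F$ lies in $\Delta\subseteq\Sigma$, so $F$ is either a face of $\Sigma$ or itself a missing face of $\Sigma$. The second alternative is excluded by the vanishing of missing faces of $\Sigma$ in dimensions $\geq r+1$, hence $F\in\Sigma$. But $F\notin\Delta=\partial\Sigma$ forces $F$ to be an interior face of $\Sigma$ of dimension at most $d-r$, contradicting the $(r-1)$-stackedness of $\Sigma$.

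For (i) the tempting first move would be Lefschetz duality on the pair $(\Sigma,\partial\Sigma)$, but this requires $\kk$-orientability, which is not available in general (see the K\"uhnel--Lassmann family in Example \ref{ex1}). Instead, the skeleton equality tells us that the augmented simplicial chain complexes $C_\bullet(\Delta;\kk)$ and $C_\bullet(\Sigma;\kk)$ coincide, along with their boundary maps, in every degree at most $d-r$; since $\widetilde H_j$ is computed from $C_{j-1},C_j,C_{j+1}$, this yields $\widetilde H_j(\Delta;\kk)\cong\widetilde H_j(\Sigma;\kk)$ for all $j\leq d-r-1=d-1-r$. Combined with $\beta_j(\Sigma)=0$ for $j\geq r$, this gives $\beta_j(\Delta)=0$ throughout $r\leq j\leq d-1-r$. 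The only step asking for any thought is the skeleton equality in the setup; once that is in hand there is no real obstacle, and in particular the orientability worry one would expect in (i) is dodged entirely by working at the chain level rather than invoking duality.
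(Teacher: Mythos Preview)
Your proof is correct and follows essentially the same route as the paper: take an $(r-1)$-stacked filling $\Sigma$ with $\partial\Sigma=\Delta$, observe that $\Sigma$ and $\Delta$ share the same $(d-r)$-skeleton, and read off both conclusions from Corollary \ref{3.3} applied to $\Sigma$. The paper states the skeleton consequence more tersely as $\beta_i(\Delta)=\beta_i(\Sigma)$ for $i<d-r$ and $\mu_j(I_\Delta)=\mu_j(I_\Sigma)$ for $j\leq d-r+1$, while you unpack these via the chain-level argument and a contradiction argument, but the content is the same.
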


\begin{proof}
Let $\Sigma$ be an $(r-1)$-stacked homology $d$-manifold with $\partial \Sigma = \Delta$.
Since $\Sigma$ and $\Delta$ have the same $(d-r)$-skeleton,
we have
$\beta_i(\Delta)=\beta_i(\Sigma)$ for  $i < d-r$
and $\mu_j(I_\Delta)=\mu_j( I_\Sigma)$ for $j \leq d-r+1$.
Then the statement follows from Corollary \ref{3.3}.
\end{proof}

\begin{remark}
\label{4.1.1}
Lemma \ref{4.2} and the above proof give another proof for the fact that
if $r \leq \frac d 2$ and if $\Delta$ is an $(r-1)$-stacked homology $(d-1)$-sphere
then $\Delta$ has no missing $k$-faces for $ r \leq k \leq d-r$.
\end{remark}

\subsection*{Local criterion}
Next, we discuss a local criterion of stackedness.
We say that a homology $d$-manifold without boundary is {\em locally $r$-stacked}
if all its vertex links are $r$-stacked.
It is clear from the definition that
if a homology manifold $\Delta$ is $r$-stacked then it is locally $r$-stacked.
It was shown by Kalai \cite[Proposition 3.5]{Ka2} that if $r< \frac d 2$ then the converse holds for the boundary of a simplicial $d$-polytope.
This property can be extended as follows:

\begin{theorem}
\label{4.4}
Let $1 \leq r< \frac d 2$.
Then a homology $(d-1)$-manifold without boundary is $(r-1)$-stacked if and only if it is locally $(r-1)$-stacked.
\end{theorem}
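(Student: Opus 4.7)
The plan is to use the auxiliary complex $\Delta(r)$ from Section~4 as the witnessing $(r-1)$-stacked $d$-manifold, with Lemma~\ref{4.2} as the main tool. The forward direction is direct: if $\Delta=\partial\Sigma$ for an $(r-1)$-stacked homology $d$-manifold $\Sigma$, then for each vertex $v$ the link $\lk_\Sigma(v)$ is a homology $(d-1)$-manifold with $\partial\lk_\Sigma(v)=\lk_\Delta(v)$, and a face $G$ of $\lk_\Sigma(v)$ is interior iff $G\cup\{v\}$ is interior in $\Sigma$. Since $\Sigma$ has no interior faces of dimension $\le d-r$, $\lk_\Sigma(v)$ has no interior faces of dimension $\le d-r-1=(d-1)-(r-1)-1$, so it is $(r-1)$-stacked, witnessing that $\lk_\Delta(v)$ is $(r-1)$-stacked.

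For the reverse direction, assume every vertex link of $\Delta$ is $(r-1)$-stacked and set $\Sigma:=\Delta(r)$. The central step is the link identity $\lk_\Sigma(v)=\lk_\Delta(v)(r-1)$ for every vertex $v$. Indeed, for $v\notin F$, the condition $F\cup\{v\}\in\Sigma$ unpacks as (a) every subset of $F$ of size $\le r+1$ lies in $\Delta$, together with (b) $\mathrm{skel}_{r-1}(F)\subset\lk_\Delta(v)$, i.e.\ $F\in\lk_\Delta(v)(r-1)$. The inclusion $\lk_\Sigma(v)\subseteq\lk_\Delta(v)(r-1)$ is immediate. For the reverse inclusion, apply Lemma~\ref{4.2} to the $(r-1)$-stacked $(d-2)$-sphere $\lk_\Delta(v)$: the complex $\lk_\Delta(v)(r-1)$ is the unique $(r-1)$-stacked $(d-1)$-ball bounded by it, hence has no interior faces of dimension $\le d-r-1$, so its $(d-r-1)$-skeleton coincides with that of $\lk_\Delta(v)$. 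Because $r<d/2$ gives $r+1\le d-r$, every $(r+1)$-subset of a face $F\in\lk_\Delta(v)(r-1)$ is again in $\lk_\Delta(v)(r-1)$ and of size $\le d-r$, so it already lies in $\lk_\Delta(v)\subset\Delta$, yielding (a).

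With the link identity in hand, every vertex link of $\Sigma$ is an $(r-1)$-stacked homology $(d-1)$-ball, so $\Sigma$ is a pure homology $d$-manifold with boundary. Both $\partial\Sigma$ and $\Delta$ have the same vertex set (namely that of $\Delta$), and comparing vertex links via $\lk_{\partial\Sigma}(v)=\partial\lk_\Sigma(v)=\partial(\lk_\Delta(v)(r-1))=\lk_\Delta(v)$ forces $\partial\Sigma=\Delta$. To verify the $(r-1)$-stackedness of $\Sigma$, take $F\in\Sigma$ with $\dim F\le d-r$; for nonempty $F$ pick $v\in F$, then $F\setminus\{v\}\in\lk_\Sigma(v)=\lk_\Delta(v)(r-1)$ has dimension $\le d-r-1$ and hence lies in $\lk_\Delta(v)$ by the stacked-ball property, so $F\in\Delta=\partial\Sigma$. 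The main obstacle is the non-trivial inclusion $\lk_\Delta(v)(r-1)\subseteq\lk_\Sigma(v)$---promoting condition (b) to condition (a)---which is precisely where the strict inequality $r<d/2$ (rather than $r\le d/2$) is used in an essential way.
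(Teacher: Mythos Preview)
Your proof is correct. Both you and the paper reduce the ``if'' direction to the link identity $\lk_\Sigma(v)=\lk_\Delta(v)(r-1)$, and both invoke Lemma~\ref{4.2} to see that this link is an $(r-1)$-stacked homology $(d-1)$-ball with boundary $\lk_\Delta(v)$; from there the arguments for $\partial\Sigma=\Delta$ and for the $(r-1)$-stackedness of $\Sigma$ are the same.

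The one genuine difference is in how the witnessing manifold is \emph{defined}. The paper builds it bottom-up as $\Sigma=\bigcup_v v*D_v$ with $D_v=\lk_\Delta(v)(r-1)$, and then has to show that these cones fit together consistently---this requires the auxiliary identity $\lk_{D_u}(v)=\lk_{D_v}(u)$, established via the ``no missing $r$-faces'' fact for $(r-1)$-stacked spheres. You instead take $\Sigma=\Delta(r)$ top-down, anticipating the uniqueness statement of Theorem~\ref{4.3}, and verify the link identity directly from the definition of $\Delta(r)$; the strict inequality $r<\frac d2$ enters cleanly as the bound $r+1\le d-r$ needed to force condition~(a) from condition~(b). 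Your route is slightly more economical and makes the connection to Theorem~\ref{4.3} transparent, while the paper's construction is more explicitly local; the two complexes are of course the same.
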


\begin{proof}
The `only if' part is obvious.
We prove the `if' part.
The proof is similar to that of \cite[Theorem 5.3]{Mc}.
Let $\Delta$ be a locally $(r-1)$-stacked homology $(d-1)$-manifold without boundary.
For every vertex $v$ of $\Delta$,
let $$D_v=\lk_\Delta(v)(r-1).$$
Then since $\lk_\Delta(v)$ is $(r-1)$-stacked, $D_v$ is a homology ball whose boundary is $\lk_\Delta(v)$ by Lemma \ref{4.2}.
Let
$$\Sigma = \bigcup_v v* D_v,$$
where $v*\Gamma$ denotes the cone of $\Gamma$ over a vertex $v$.
We claim that $\Sigma$ is an $(r-1)$-stacked homology manifold with $\partial \Sigma=\Delta$.

For any vertex $v$ of $\Delta$, we have
$$\lk_\Sigma(v)=D_v \cup \bigcup_{u \ne v} u* \lk_{D_u}(v).$$
Since $\lk_\Delta(v)$
and $\lk_\Delta(u)$ are $(r-1)$-stacked homology $(d-2)$-spheres,
they have no missing $r$-faces (see Remark \ref{4.1.1}).
Thus
$$\lk_{D_u}(v)=\lk_{\lk_\Delta(u)(r-1)}(v)=\lk_{\lk_\Delta(u)}(v)(r-1)$$
and
$$\lk_{D_v}(u) = \lk_{\lk_\Delta(v)(r-1)}(u)= \lk_{\lk_\Delta(v)}(u)(r-1).$$
Then since $\lk_{\lk_\Delta(v)}(u)=\lk_{\lk_\Delta(u)}(v)$,
we have $u* \lk_{D_u}(v)= u* \lk_{D_v}(u) \subset D_v$.
Hence
$$\lk_\Sigma(v)=D_v$$
for any vertex $v$ of $\Delta$,
which implies that $\Sigma$ is a homology manifold with boundary.
Also, one has
$$
F \in \partial \Sigma \Leftrightarrow
F-v \in \partial (\lk_\Sigma(v))=\partial D_v=\lk_\Delta(v) \mbox{ for some }v\in F
\Leftrightarrow F \in \Delta
$$
which implies $\partial \Sigma=\Delta$.
Finally, we show that $\Sigma$ is $(r-1)$-stacked: if $F \in \Sigma$ is an interior face then $F-v$ is an interior face of $\lk_\Sigma(v)=D_v$, where $v \in F$,
and we have $\dim F > d-r$ since $\lk_\Delta(v)$ is $(r-1)$-stacked.
\end{proof}

\begin{remark}
Theorem \ref{4.4} fails for $r= \frac d 2$.
Indeed, the join $\Delta$ of boundaries of two $r$-simplices is a $(2r-1)$-sphere
which is not $(r-1)$-stacked but it is locally $(r-1)$-stacked.
Indeed, $\Delta$ is not $(r-1)$-stacked since $\Delta(r-1)$ is the power set of $[n]$.
Also, $\Delta$ is locally $(r-1)$-stacked since, for every vertex $v$ of $\Delta$,
$\lk_\Delta(v)$ is the boundary of the join of an $(r-1)$-simplex and the boundary of an $r$-simplex.
\end{remark}

\begin{remark}
Theorems \ref{4.3} (for $r< \frac d 2$), \ref{4.1}(ii) and \ref{4.4} were also proved independently by Bagchi and Datta \cite[Theorem 2.19]{BD3}
with essentially the same method.
\end{remark}

\section{New necessary condition for face numbers of manifolds}

McMullen and Walkup \cite{MW} conjectured that,
for the boundary complex $\Delta$ of a simplicial $d$-polytope,
one has $h_{r-1}(\Delta) \leq h_r(\Delta)$ for $r \leq \frac d 2$ and if equality holds for some $r$
then $\Delta$ is $(r-1)$-stacked.
This conjecture is called the generalized lower bound conjecture (GLBC for short).
The first part of the GLBC was solved by Stanley \cite{Stg} in his proof of the necessity of the $g$-theorem
and the second part of the GLBC was recently proved in \cite{MN}.
Recall that a connected homology $d$-manifold $\Delta$ without boundary is said to be {\em orientable} if $\beta_d(\Delta)=1$.
Motivated by the GLBC, Bagchi and Datta \cite[Conjecture 1.6]{BD2} suggested the following conjecture.

\begin{conjecture}[{GLBC for triangulated manifolds}]
\label{5.1}
Let $\Delta$ be a connected triangulated $(d-1)$-manifold without boundary. Then
\begin{itemize}
\item[(i)] $h_r(\Delta) \geq h_{r-1}(\Delta)+ \binom {d+1} r  \sum_{j=1}^r (-1)^{r-j} \beta_{j-1}(\Delta)$ for $r=1,2,\dots,\lfloor \frac d 2 \rfloor$.
\item[(ii)] if an equality holds for some $r< \frac d 2$ in (i)
then  $\Delta$ is locally $(r-1)$-stacked.
\end{itemize}
\end{conjecture}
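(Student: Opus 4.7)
The plan is to attack Conjecture \ref{5.1} algebraically, in the style of Stanley's proof of the necessity of the $g$-theorem. Throughout I assume the additional hypothesis that every vertex link of $\Delta$ is polytopal, since this is what allows us to feed the $g$-theorem into the argument; without some such hypothesis the conjecture seems out of reach.

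First introduce the $\tilde g$-vector
\[
\tilde g_r(\Delta)\ :=\ h_r(\Delta)-h_{r-1}(\Delta)-\binom{d+1}{r}\sum_{j=1}^r (-1)^{r-j}\beta_{j-1}(\Delta),
\]
so that part (i) of Conjecture \ref{5.1} is the assertion $\tilde g_r(\Delta)\ge 0$. Using $\binom{d+1}{r}=\binom d r+\binom d{r-1}$ one rewrites $\tilde g_r(\Delta)=(h_r''(\Delta)-h_{r-1}''(\Delta))-\binom d {r-1}\beta_{r-1}(\Delta)$, which suggests the following algebraic model. Let $\Theta$ be a generic l.s.o.p.\ of $\kk[\Delta]$, set $R=\kk[\Delta]/(\Theta)$, let $N\subset\Soc(R)$ be the socle ideal of Lemma \ref{2.1}(ii), and pick a generic linear form $\omega$. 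Setting $A:=(R/N)/\omega(R/N)$, the Hilbert function of $A$ in degree $r$ equals $h_r''(\Delta)-h_{r-1}''(\Delta)+\dim_\kk\ker\bigl(\times\omega\colon (R/N)_{r-1}\to (R/N)_r\bigr)$. The claim is that this kernel has dimension exactly $\binom d{r-1}\beta_{r-1}(\Delta)$, coming from the degree-$r$ part of the socle ideal $N$ killed modulo $\omega$, which would identify the Hilbert function of $A$ in degrees $r\le d/2$ with $\tilde g(\Delta)$.

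The heart of the argument is a weak Lefschetz statement: for generic $\omega$, multiplication by $\omega$ is injective on $(R/N)_{r-1}$ for every $r\le d/2$. This is where the polytopal vertex link hypothesis enters, via a link-by-link reduction. A Kalai-type rigidity argument reduces injectivity of $\times\omega$ on $R/N$ to Hard Lefschetz on each $\kk[\lk_\Delta(v)]$ modulo a generic l.s.o.p., which is Stanley's theorem for the polytopal sphere $\lk_\Delta(v)$. I expect this to be the main obstacle: the socle ideal $N$ must be tracked carefully through the reduction, because its presence at the manifold level means that the usual symmetry-based arguments available for spheres have to be replaced by their Buchsbaum analogues. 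Once the weak Lefschetz property is in hand, $A$ is a standard graded Artinian $\kk$-algebra in the relevant range, so Macaulay's theorem gives that $\tilde g(\Delta)$ is an $M$-vector. In particular $\tilde g_r(\Delta)\ge 0$, which is (i).

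For (ii), suppose $\tilde g_r(\Delta)=0$ for some $r<d/2$. The polytopal link hypothesis passes to every vertex link, so (i) applied to $\lk_\Delta(v)$, a polytopal $(d-2)$-sphere whose relevant Betti numbers vanish, yields $g_r(\lk_\Delta(v))\ge 0$ for every $v$. Combining this with a standard identity expressing $\sum_v h_r(\lk_\Delta(v))$ in terms of $h_r(\Delta)$, $h_{r-1}(\Delta)$ and the Betti numbers of $\Delta$, together with the hypothesis $\tilde g_r(\Delta)=0$, forces $g_r(\lk_\Delta(v))=0$ for every vertex $v$. The equality case of the GLBC for simplicial polytopes proved in \cite{MN} then gives that each $\lk_\Delta(v)$ is $(r-1)$-stacked, i.e.\ $\Delta$ is locally $(r-1)$-stacked, as desired.
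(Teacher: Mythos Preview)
Your plan for (i) has a genuine gap. You compute correctly that $\dim_\kk A_r = h''_r - h''_{r-1} + \dim_\kk\ker\bigl(\times\omega|_{(R/N)_{r-1}}\bigr)$, but then make two incompatible claims: that this kernel has dimension $\binom{d}{r-1}\beta_{r-1}$, and that $\times\omega$ is injective on $(R/N)_{r-1}$. In fact injectivity is what holds (once you add the missing hypothesis that $\Delta$ is \emph{orientable}, so that $R/N$ is Gorenstein by Lemma~\ref{5.5}(ii)): Swartz's surjectivity of $\times\omega$ on $R$ in degrees $\ge \frac d 2 +1$ passes to $R/N$ and, by Gorenstein duality, becomes injectivity in degrees $<\frac d 2$. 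But then $\dim_\kk A_r = h''_r - h''_{r-1} = \tilde g_r + \binom{d}{r-1}\beta_{r-1}$, which overshoots $\tilde g_r$ by exactly the Betti correction you are trying to account for; your $A$ does not absorb it, and there is no evident further ideal of the right size. The paper circumvents this by working on the Matlis-dual side: it embeds $(R/N)^\vee\cong R/N(+d)$ inside $R^\vee$ and quotients by $wR^\vee$ rather than by $w(R/N)$, so that the dimension subtracted in degree $i-d$ is $\dim_\kk R^\vee_{i-1-d}=h'_{d-i+1}$ rather than $h''_{d-i+1}$. The identity $\tilde g_i = h''_{d-i} - h'_{d-i+1}$ of Corollary~\ref{5.5.5} (which uses Poincar\'e duality, hence again orientability) then yields the correct Hilbert function.

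For (ii), the ``standard identity expressing $\sum_v h_r(\lk_\Delta(v))$ in terms of $h_r(\Delta)$, $h_{r-1}(\Delta)$ and the Betti numbers'' that you invoke does not exist in a form that lets $\tilde g_r(\Delta)=0$ force each $g_r(\lk_\Delta(v))=0$; the double-counting relations between link $h$-vectors and $h(\Delta)$ involve no Betti numbers and do not isolate the individual link $g$-numbers. The paper argues algebraically instead: $\tilde g_r(\Delta)=0$ combined with Lemma~\ref{5.3} makes $\times w\colon (R/N)_{d-r}\to R_{d-r+1}$ bijective; chasing a commutative square against the injective maps $\times x_v\colon \bigl(S/(I_{\st_\Delta(v)}+(\Theta))\bigr)_j \to R_{j+1}$ of \cite[Proposition~4.24]{Sw} forces $\times w$ to be injective on $\bigl(S/(I_{\st_\Delta(v)}+(\Theta))\bigr)_{d-r-1}$, whence $h_{d-r-1}(\lk_\Delta(v))\le h_{d-r}(\lk_\Delta(v))$. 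Unimodality from the WLP then gives equality, and Lemma~\ref{5.6} finishes.
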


Concerning part (i) of the conjecture,
a similar conjecture was given by Swartz \cite{Sw}.
Moreover, it was proved by Novik and Swartz that (i) holds for all homology manifolds all whose
vertex links satisfy certain algebraic property called the weak
Lefschetz property.
See \cite[p.\ 270, Inequality (9)]{NS3}.
Also, the conjecture is known to be true for orientable manifolds when $r=2$ \cite[Theorem 5.2]{NS1}.

Conjecture \ref{5.1} suggests us to study the following invariant of simplicial complexes,
which we call the {\em $\tilde g$-vector}.
For a simplicial complex $\Delta$ of dimension $d-1$,
let 
$$\tilde g_r(\Delta)= h_r(\Delta) - h_{r-1}(\Delta) - \binom {d+1} r \sum_{j=1}^r (-1)^{r-j} \beta_{j-1}(\Delta)$$
for $r=0,1,2,\dots,\lfloor \frac d 2 \rfloor$, where $\tilde g_0(\Delta)=1$,
and let $\tilde g(\Delta)=(\tilde g_0(\Delta),\tilde g_1(\Delta),\dots,\tilde g_{\lfloor \frac d 2 \rfloor}(\Delta))$.
Then Conjecture \ref{5.1}(i) asks if $\tilde g_k(\Delta) \geq 0$ for all $k$ when $\Delta$ is a connected triangulated manifold without boundary.
For an $(r-1)$-stacked homology $(d-1)$-manifold with $r \leq \frac d 2$,
its $\tilde g$-vector has the following simple but interesting form.

\begin{proposition}
\label{5.7}
Let $1 \leq r \leq \frac d 2$ and let $\Delta$ be an $(r-1)$-stacked homology $d$-manifold with boundary.
Then
$\tilde g_i(\partial \Delta)=h_i''(\Delta)$
for $i \leq \frac d 2.$
\end{proposition}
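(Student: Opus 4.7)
The plan is to read off $\tilde g_i(\partial \Delta)$ from the Dehn--Sommerville computation already done for $\Delta$, use the $(r-1)$-stackedness to collapse that expression to $h_i(\Delta)$, and then match the result against the definition of $h_i''(\Delta)$ by comparing the Betti numbers of $\Delta$ and $\partial\Delta$ in low degrees.

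First I will invoke Theorem \ref{3.1}, applied with $d$ replaced by $d+1$ since here $\Delta$ is a $d$-manifold rather than a $(d-1)$-manifold; the first half of its proof shows that being $(r-1)$-stacked is equivalent to having $g_i(\partial\Delta)=h_i(\Delta)$ for all $i\le d+1-r$. Because $r\le d/2$, this identity is available throughout the range $i\le d/2$ that the proposition requires. Next I will unpack
\[
\tilde g_i(\partial \Delta)= g_i(\partial \Delta) - \binom{d+1}{i}\sum_{j=1}^i (-1)^{i-j}\beta_{j-1}(\partial \Delta),
\]
substitute $g_i(\partial \Delta)=h_i(\Delta)$, and compare with
\[
h_i''(\Delta) = h_i(\Delta) - \binom{d+1}{i}\sum_{j=1}^i (-1)^{i-j}\beta_{j-1}(\Delta).
\]
The two expressions differ only in the Betti terms, so the proposition reduces to showing $\beta_{j-1}(\Delta)=\beta_{j-1}(\partial \Delta)$ for $1\le j\le i\le d/2$.

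The core of the proof is therefore this Betti-number comparison, and this is precisely where $r\le d/2$ is essential. Since $\Delta$ is an $(r-1)$-stacked $d$-manifold, by definition it has no interior faces of dimension $\le d-r$, so $\Delta$ and $\partial\Delta$ share the same $(d-r)$-skeleton. Because simplicial homology in degree $m$ depends only on the $(m+1)$-skeleton, this forces $\beta_m(\Delta)=\beta_m(\partial\Delta)$ for every $m\le d-r-1$. The inequality $r\le d/2$ gives $d-r-1\ge d/2-1\ge j-1$ whenever $j\le i\le d/2$, so the required equalities fall into the range where the skeletons agree.

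The main obstacle is essentially bookkeeping rather than a conceptual difficulty: one must re-index Theorem \ref{3.1} (stated for $(d-1)$-manifolds) to the $d$-manifold $\Delta$, and verify that the three bounds $d+1-r$, $d-r$, and $d-r-1$ all comfortably accommodate $i\le d/2$ under the hypothesis $r\le d/2$. Once the indices are lined up, no further input beyond the proof of Theorem \ref{3.1} and the elementary skeleton observation is needed.
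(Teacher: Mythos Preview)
Your proof is correct and follows essentially the same approach as the paper: both use the equivalence established in the proof of Theorem~\ref{3.1} to get $g_i(\partial\Delta)=h_i(\Delta)$ in the required range, observe that $\Delta$ and $\partial\Delta$ share a large enough skeleton (the paper states the $\lfloor d/2\rfloor$-skeleton, you more precisely identify the $(d-r)$-skeleton) to equate the relevant Betti numbers, and then subtract the Betti-correction term from both sides. Your version is simply more explicit about the reindexing from $(d-1)$-manifolds to $d$-manifolds and the inequality checks.
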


\begin{proof}
Since $\Delta$ and $\partial \Delta$ have the same $\lfloor \frac d 2 \rfloor$-skeleton,
$\beta_{k-1}(\Delta)=\beta_{k-1}(\partial \Delta)$
for $k \leq \frac d 2$,
and,
as shown in the proof of Theorem \ref{3.1},
$$g_i(\partial \Delta)=h_i(\Delta)$$
for $i \leq \frac d 2$.
Subtracting $\binom {d+1} i \sum_{j=1}^{i} (-1)^{i-j} \beta_{j-1}(\Delta)$
from the above equation,
we obtain the desired equation.
\end{proof}

Recall that a vector $h=(h_0,h_1,\dots,h_t) \in \ZZ^{t+1}$ is said to be an {\em $M$-vector}
if there is a standard graded $\kk$-algebra $A$ such that $h_k=\dim_\kk A_k$ for $k=0,1,\dots,t$.
Lemma \ref{2.1}(ii) shows that, in Proposition \ref{5.7},
$\tilde g(\partial \Delta)$ is not only a non-negative vector but also an $M$-vector.
It is natural to ask if $\tilde g(\partial \Delta)$ is an $M$-vector for {\em any} homology manifold without boundary.
In this section, we prove that this property as well as Conjecture \ref{5.1} hold for orientable homology manifolds all whose links satisfy a certain algebraic condition described below.

We say that a homology $(d-1)$-sphere $\Delta$ on $[n]$ has the {\em weak Lefschetz property} (WLP for short)
if there is an l.s.o.p.\ $\Theta$ of $\kk[\Delta]=S/I_\Delta$  and a linear form $w \in S_1$ such that the multiplication
\begin{align}
\label{5-1}
\times w: (S/(I_\Delta+(\Theta)))_{i-1} \to (S/(I_\Delta+(\Theta)))_i 
\end{align}
is injective for $i \leq \frac {d+1} 2$ and is surjective for $i \geq \frac {d+1} 2$.
Note that it is known that the boundary complex of a simplicial polytope has the WLP over the rationals.
The following result is due to Swartz \cite[Theorem 4.26]{Sw}

\begin{lemma}[Swartz]
\label{5.3}
Let $\Delta$ be a connected orientable homology $(d-1)$-manifold without boundary on $[n]$.
Suppose that all the vertex links of $\Delta$ have the WLP.
Then there is an l.s.o.p.\ $\Theta$ of $\kk[\Delta]$ and a linear form $w$ such that the multiplication map
$$ \times w: (S/(I_\Delta+(\Theta)))_{i-1} \to (S/(I_\Delta+(\Theta)))_i 
$$
is surjective for all $i \geq \frac d 2 +1$.
\end{lemma}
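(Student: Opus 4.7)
My plan is to reduce surjectivity of $\times w$ in high degrees to injectivity of $\times w$ in low degrees via a Poincar\'e-type duality coming from orientability, and then to extract the low-degree injectivity from the WLP hypothesis on each vertex link. First, I would choose $\Theta = \theta_1,\dots,\theta_d \in S_1$ and $w \in S_1$ from a suitable Zariski-open dense set so that $\Theta$ is an l.s.o.p.\ of $\kk[\Delta]$, its image in each vertex link ring is an l.s.o.p., and $w$ simultaneously witnesses the WLP on every vertex link. Set $R = S/(I_\Delta+(\Theta))$ and let $N \subset \Soc(R)$ be the ideal from Lemma~\ref{2.1}(ii), so that $F_{R/N}(t)$ equals the $h''$-polynomial of $\Delta$.

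Second, I would invoke the Novik--Swartz analysis of the socle for orientable manifolds: Dehn--Sommerville yields the symmetry $h''_i(\Delta)=h''_{d-i}(\Delta)$, and the fundamental class of $\Delta$ sitting in degree $d$ endows $R/N$ with the structure of a Gorenstein Artinian algebra, with a non-degenerate pairing $(R/N)_i \times (R/N)_{d-i} \to (R/N)_d \cong \kk$ compatible with multiplication by linear forms. Under this duality, $\times w: (R/N)_{i-1} \to (R/N)_i$ is surjective if and only if $\times w: (R/N)_{d-i} \to (R/N)_{d-i+1}$ is injective. For $i \geq \frac{d}{2} + 1$ this rephrases the problem as an injectivity statement on $R/N$ in degrees $\leq \frac{d}{2} - 1$.

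The final and hardest step is to establish this low-degree injectivity from the WLP of the vertex links. The WLP of $\lk_\Delta(v)$ gives that $\times \bar{w}$ is injective on $\kk[\lk_\Delta(v)]/(\bar{\Theta})$ in degrees $\leq \frac{d}{2}$. I would then use a Buchsbaum-type exact sequence for $\kk[\Delta]$ (or equivalently Hochster's formula describing $H^\bullet_\mideal(\kk[\Delta])$ in terms of link cohomology) to embed the kernel of $\times w$ on $R$ in low degrees into a direct sum of kernels on link algebras; orientability ensures this embedding descends to $R/N$, and so the link-level injectivity forces the required global injectivity. The main obstacle is precisely this local-to-global step: because $R$ is only Buchsbaum, the kernels of $\times w$ are not determined by link data in a naive way, and one must carefully quotient by the socle ideal $N$ (whose degree-by-degree structure is pinned down by Lemma~\ref{2.4} together with orientability) before the link comparison becomes exact. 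A minor complication arises in parity cases, where the middle degree $\lceil \frac{d}{2} \rceil$ needs separate handling.
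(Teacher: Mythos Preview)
The paper does not supply its own proof of this lemma; it is quoted as \cite[Theorem~4.26]{Sw}. Swartz's argument is quite different from your outline and considerably more direct. After choosing $\Theta$ and $w$ generically so that they serve as an l.s.o.p.\ plus Lefschetz element on every vertex star, one observes that multiplication by $x_v$ gives a well-defined degree-one map from $T^v := S/(I_{\st_\Delta(v)}+(\Theta))$ into $R$, and that $T^v$ is the Artinian reduction of the Gorenstein ring $\kk[\st_\Delta(v)]$, with the same Hilbert function as that of $\kk[\lk_\Delta(v)]$. The WLP of the link makes $\times w : T^v_{i-2} \to T^v_{i-1}$ \emph{surjective} once $i-1 \geq \frac d 2$, whence $x_v T^v_{i-1} = w\, x_v T^v_{i-2} \subset w R_{i-1}$. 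Since $R_i = \sum_v x_v R_{i-1} = \sum_v x_v T^v_{i-1}$ for $i\geq 1$, the desired surjectivity on $R$ follows at once. No duality step is involved, and orientability is not actually used for this part.

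Your route has a genuine gap. The Gorenstein pairing you invoke lives on $R/N$, not on $R$: it converts low-degree injectivity of $\times w$ on $R/N$ into high-degree surjectivity on $R/N$ only. Because $wN=0$, surjectivity on $R/N$ gives merely $R_i = wR_{i-1} + N_i$, and for $i>\frac d 2$ the socle piece $N_i$ has dimension $\binom d i \beta_{i-1}(\Delta)$, which need not vanish; you provide no mechanism for absorbing $N_i$ into $wR_{i-1}$, so the statement of the lemma (which is about $R$, not $R/N$) does not follow. Your third step is also underspecified: Lemma~\ref{2.4} and the Buchsbaum machinery control the kernels of the successive $\theta_i$ on partial quotients, not the kernel of an \emph{extra} linear form $w$ acting on the full Artinian reduction $R$, so the local-to-global comparison you sketch does not come for free. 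The $x_v$-transfer in Swartz's proof sidesteps both difficulties by working directly with surjectivity on the link side.
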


The main result of this section is the following.

\begin{theorem}
\label{5.4}
With the same assumptions and notation as in Lemma \ref{5.3}, let $R=S/(I_\Delta+(\Theta))$ and $R'=R/wR$.
Then
\begin{itemize}
\item[(i)] there is an ideal $J \subset R'$ such that $\dim_\kk (R'/J)_i = \tilde g_i(\Delta)$ for $i \leq \frac d 2$.
In particular, $\tilde g(\Delta)$ is an $M$-vector.
\item[(ii)] if $\tilde g_r(\Delta)=0$ for some $r < \frac d 2$ then $\Delta$ is locally $(r-1)$-stacked.
\end{itemize}
\end{theorem}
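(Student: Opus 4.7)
The plan is to construct $J$ as a subideal of $\Soc(R')$ whose dimension is dictated by a direct Hilbert-function computation. The key extra input is the orientable refinement of Lemma \ref{2.1}(ii) due to Novik--Swartz: for a connected orientable homology $(d-1)$-manifold $\Delta$, the ideal $N$ in Lemma \ref{2.1}(ii) can be chosen so that $\bar R := R/N$ is artinian Gorenstein with symmetric Hilbert series $\sum h''_i(\Delta)\,t^i$ of socle degree $d$. Lemma \ref{5.3}'s surjectivity of $\times w$ on $R$ descends to $\bar R$, and Gorenstein duality on $\bar R$ (the adjoint of $\times w\colon \bar R_{i-1}\to \bar R_i$ under the perfect pairing $\bar R_i\otimes \bar R_{d-i}\to \bar R_d\cong \kk$ is $\times w$ on the complementary degrees) upgrades this to injectivity of $\times w\colon \bar R_{i-1}\to \bar R_i$ for every $i\le d/2$. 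In particular, in that range the chain $N_{i-1}\subset \Soc(R)_{i-1}\subset \Ker(\times w\colon R_{i-1}\to R_i)$ collapses to an equality, and counting the resulting short exact sequence gives
\[
\dim R'_i\ =\ h'_i-h'_{i-1}+\binom{d}{i-1}\beta_{i-2}(\Delta)\quad\text{for every }i\le d/2.
\]

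A direct manipulation with the definitions of $h'$, $h''$, and $\tilde g$ yields $\dim R'_i-\tilde g_i(\Delta)=\binom{d+1}{i}\beta_{i-1}(\Delta)$ in this range. Part (i) therefore reduces to producing an ideal $J\subset R'$ of dimension exactly $\binom{d+1}{i}\beta_{i-1}(\Delta)$ in each degree $i\le d/2$; a natural candidate is a subideal of $\Soc(R')$. This choice is justified by an extension of Lemma \ref{2.1}(ii) from the l.s.o.p.\ $\Theta$ of $d$ elements to the sequence $(\Theta, w)$ of $d+1$ linear forms, namely $\dim\Soc(R')_i\ge\binom{d+1}{i}\beta_{i-1}(\Delta)$ for $i\le d/2$. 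Establishing this socle lower bound is the main obstacle I foresee; the argument should adapt the Koszul-theoretic proof of Lemma \ref{2.1}(ii) so that the extra direction supplied by $w$ upgrades each copy of $\widetilde H^{j-1}(\Delta;\kk)$ in the degree-zero part of $H^j_{\mideal}(\kk[\Delta])$ from contributing $\binom{d}{j}$ socle elements in $R$ to contributing $\binom{d+1}{j}$ in $R'$. The generic choice of $w$ guaranteed by Lemma \ref{5.3} is essential to rule out spurious cancellations among these prospective socle elements.

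For part (ii), suppose $\tilde g_r(\Delta)=0$ with $r<d/2$. Then $(R'/J)_r=0$, and because $R'/J$ is standard graded this propagates to $(R'/J)_j=0$, hence $\tilde g_j(\Delta)=0$, for all $r\le j\le d/2$. A Swartz-type summation identity expresses $\sum_v g_r(\lk_\Delta(v))$ as a non-negative combination of $\tilde g_r(\Delta)$ and $\tilde g_{r+1}(\Delta)$ (the Betti corrections cancel because each vertex link is a homology sphere, on which $g=\tilde g$), which forces $g_r(\lk_\Delta(v))=0$ for every vertex $v$. Since each $\lk_\Delta(v)$ is a homology $(d-2)$-sphere with the WLP, the generalized lower bound theorem for such spheres proved in \cite{MN} converts this equality into $(r-1)$-stackedness of the link, i.e.\ local $(r-1)$-stackedness of $\Delta$.
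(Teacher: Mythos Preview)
Your approach to (i) has a genuine gap: the socle lower bound $\dim_\kk\Soc(R')_i\ge\binom{d+1}{i}\beta_{i-1}(\Delta)$ that you flag as ``the main obstacle'' is precisely Conjecture~\ref{con1} of the paper, stated there as \emph{open}. The Koszul/local-cohomology machinery behind Lemma~\ref{2.1}(ii) does not extend routinely from the l.s.o.p.\ $\Theta$ to the longer sequence $(\Theta,w)$: $w$ is not part of a system of parameters, and the Buchsbaum structure of $\kk[\Delta]$ gives no control over the $w$-kernel beyond what Lemma~\ref{5.3} already supplies. The paper sidesteps this obstacle entirely. Instead of seeking $J$ inside $\Soc(R')$, it passes to the Matlis dual $R^\vee$, uses the Gorenstein isomorphism $(R/N)^\vee\cong R/N(+d)$ from Lemma~\ref{5.5}(ii), notes that $wR^\vee$ lands in the image $A\cong R/N(+d)$ of $(R/N)^\vee\hookrightarrow R^\vee$ (because $\mideal N^\vee=0$), and takes $J$ so that $R'/J\cong A/wR^\vee(-d)$. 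The Hilbert-function count then reduces to Corollary~\ref{5.5.5}, $\tilde g_i=h''_{d-i}-h'_{d-i+1}$, and uses only the surjectivity of $\times w$ on $R$ in high degrees (dualized to injectivity on $R^\vee$)---never a socle bound on $R'$. Your preliminary computations ($\Ker(\times w\colon R_{i-1}\to R_i)=N_{i-1}$ for $i\le d/2$, hence $\dim R'_i-\tilde g_i=\binom{d+1}{i}\beta_{i-1}$) are correct and illuminating, but they do not by themselves produce the ideal $J$.

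Your argument for (ii) also contains an error. In the identity $\sum_v g_r(\lk_\Delta(v))=(r+1)\,g_{r+1}(\Delta)+(d-r+1)\,g_r(\Delta)$, replacing $g$ by $\tilde g$ on the right does \emph{not} make the Betti corrections cancel: a direct computation (using $(r+1)\binom{d+1}{r+1}=(d-r+1)\binom{d+1}{r}$) leaves the residual term $(d-r+1)\binom{d+1}{r}\beta_r(\Delta)$, and nothing you have written forces $\beta_r(\Delta)=0$. The paper proves (ii) by a local, vertex-by-vertex argument instead: from $\tilde g_r=h''_{d-r}-h'_{d-r+1}=0$ and Lemma~\ref{5.3} it deduces that $\times w\colon (R/N)_{d-r}\to R_{d-r+1}$ is bijective, then for each vertex $v$ compares with the Gorenstein artinian reduction of $\kk[\st_\Delta(v)]$ via the injective map $\times x_v$ of \cite[Proposition~4.24]{Sw} to force $\times w$ injective on $\big(S/(I_{\st_\Delta(v)}+(\Theta))\big)_{d-r-1}$. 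This yields $g_r(\lk_\Delta(v))=0$ directly, and Lemma~\ref{5.6} finishes.
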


Theorem \ref{5.4}(i) extends the result of Novik and Swartz \cite{NS3} who proved the non-negativity of $\tilde g$-vectors
for homology manifolds all whose vertex links have the WLP,
and Theorem \ref{5.4}(ii) proves that Conjecture \ref{5.1}(ii) holds for these manifolds.
In particular,
Conjecture \ref{5.1} holds 
for any rational homology manifold all whose vertex links are polytopal,
namely, are the boundary complexes of simplicial polytopes.
It was conjectured that any homology sphere has the WLP.
Thus, if this conjecture is true then Conjecture \ref{5.1} holds for all orientable homology manifolds.

We first introduce some lemmas which are used in the proof the above theorem.
The following was shown in \cite[Lemma 7.3]{No} and in \cite[Theorem 1.4]{NS2}.

\begin{lemma}
\label{5.5}
Let $\Delta$ be a connected orientable homology $(d-1)$-manifold without boundary on $[n]$.
\begin{itemize}
\item[(i)] (Novik) $h_i''(\Delta)=h_{d-i}''(\Delta)$ for all $i$.
\item[(ii)] (Novik--Swartz)
Let $\Theta=\theta_1,\dots,\theta_d$ be an l.s.o.p.\ of $\kk[\Delta]$,
$R=S/(I_\Delta+(\Theta))$ and $N=\bigoplus_{j=0}^{d-1} \Soc(R)_j$.
Then $F_{R/N}(t)= h_0''(\Delta)+h_1''(\Delta)t+\cdots +h_d''(\Delta)t^d$.
Moreover,
$R/N$ is Gorenstein.
\end{itemize}
\end{lemma}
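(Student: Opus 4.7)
My plan is to prove (ii) first and deduce (i) as a corollary: once $R/N$ is known to be a graded Artinian Gorenstein $\kk$-algebra with Hilbert series $\sum_i h''_i(\Delta) t^i$, the symmetry $h''_i(\Delta) = h''_{d-i}(\Delta)$ is automatic, since any graded Artinian Gorenstein algebra over a field has palindromic Hilbert series. So the work is all in (ii).

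For the Hilbert series statement in (ii), the strategy is to compute $\dim_\kk \Soc(R)_i$ exactly. Lemma \ref{2.1}(ii) already gives the lower bound $\dim_\kk \Soc(R)_i \geq \binom{d}{i}\beta_{i-1}(\Delta)$ in degrees $i<d$, and in top degree $R_d = \Soc(R)_d$ has dimension $h'_d(\Delta) = \beta_{d-1}(\Delta) = 1$ by connectedness and orientability. For the matching upper bound in degrees $i<d$, I would invoke Gr\"abe's formula for the local cohomology $H^j_{\mideal}(\kk[\Delta])$ of a Buchsbaum Stanley--Reisner ring as a direct sum of reduced cohomologies of links; in the manifold case proper links are homology spheres, so only the empty face contributes to $H^j_{\mideal}$ for $j<d$, giving $\dim_\kk H^j_{\mideal}(\kk[\Delta])_0 = \beta_{j-1}(\Delta)$. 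Propagating this through the kernel filtration of Lemma \ref{2.4} pins down $\dim_\kk \Soc(R)_i = \binom{d}{i}\beta_{i-1}(\Delta)$ for $i<d$, and a direct subtraction yields $F_{R/N}(t) = \sum_{i=0}^d h''_i(\Delta) t^i$.

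The crux is the Gorenstein assertion $\dim_\kk \Soc(R/N) = 1$, concentrated in degree $d$. Any socle representative $\bar f \in (R/N)_j$ with $j<d$ lifts to $f \in R_j$ with $\mideal f \subset N \subset \Soc(R)$, whence $\mideal^2 f = 0$; the Gorenstein conclusion requires ruling out such second-order socle elements in degrees $<d$. Orientability enters decisively here: the top local cohomology $H^d_{\mideal}(\kk[\Delta])$ is a rank-one free module --- the canonical module of $\kk[\Delta]$ --- and pushing this through the Artinian reduction by $\Theta$ and then by $N$ produces a perfect graded pairing $(R/N)_i \otimes_\kk (R/N)_{d-i} \to (R/N)_d \cong \kk$, forcing $\Soc(R/N)$ to be one-dimensional and concentrated in degree $d$. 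The main obstacle I anticipate is setting up this Poincar\'e-type self-duality cleanly at the Artinian level --- identifying precisely which piece of $\Soc(R)$ must be killed, and checking that killing exactly $N$ both realizes the target Hilbert series and preserves the duality. Once in place, Gorensteinness is immediate, (i) follows, and the "moreover" clause of (ii) is established.
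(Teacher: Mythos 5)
First, note that the paper does not actually prove Lemma \ref{5.5}: both parts are quoted, with attribution, from Novik \cite[Lemma 7.3]{No} and Novik--Swartz \cite[Theorem 1.4]{NS2}, so your proposal is being measured against those external proofs rather than against an argument in the text. Your overall architecture does broadly match the Novik--Swartz proof: the socle bound of Lemma \ref{2.1}(ii), Gr\"abe's computation of the local cohomology of $\kk[\Delta]$, and a Poincar\'e-type duality coming from orientability are exactly the right ingredients. Your derivation of (i) from (ii) via the palindromic Hilbert series of a graded Artinian Gorenstein algebra is also valid, once you observe that $(R/N)_d=R_d$ has dimension $h_d'(\Delta)=\beta_{d-1}(\Delta)=1$, so the socle degree is $d$ (though Novik's original proof of (i) is an elementary computation from the Dehn--Sommerville relations together with Poincar\'e duality $\beta_k=\beta_{d-1-k}$, and needs none of this machinery).

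However, there are genuine gaps at precisely the two hard points. First, the claim that the equality $\dim_\kk \Soc(R)_i=\binom d i \beta_{i-1}(\Delta)$ for $i<d$ follows by ``propagating'' Gr\"abe's formula ``through the kernel filtration of Lemma \ref{2.4}'' is a non sequitur: that filtration controls the kernels of the successive multiplications $\times\theta_j$ and yields Schenzel's formula for the Hilbert function of $R$ (the $h'$-vector), but it says nothing about which elements of $R$ are annihilated by all of $\mideal$. The inequality of Lemma \ref{2.1}(ii) can be strict for general Buchsbaum complexes, and its being an equality for connected orientable homology manifolds is itself the main content of \cite{NS2}, not a formal consequence of the stated ingredients. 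Second, the perfect pairing on $R/N$ --- which you correctly identify as the crux --- is only described as something to be ``set up cleanly,'' not constructed. Moreover, your proposed order of steps is structurally problematic: in the actual proof the socle equality and the Gorenstein property are established \emph{simultaneously}, by showing that $R$ modulo a canonically defined submodule of $\Soc(R)$ of dimension $\binom d i\beta_{i-1}(\Delta)$ in degree $i$ is Gorenstein (via an identification with a quotient of the canonical module $\omega_{\kk[\Delta]}$, which is where orientability enters), and only then deducing that this submodule exhausts $\Soc(R)$ in degrees below $d$, hence that $N$ has the asserted dimensions. As written, your plan tries to pin down the Hilbert function of $R/N$ before the duality is available, and that step cannot be completed from what you have assembled.
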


\begin{corollary}
\label{5.5.5}
If $\Delta$ is a connected orientable homology $(d-1)$-manifold without boundary
then $\tilde g_i(\Delta) = h''_{d-i}(\Delta)-h'_{d-i+1}(\Delta)$ for $ i \leq \frac d 2$.
\end{corollary}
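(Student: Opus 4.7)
The plan is to prove the identity by direct expansion from the definitions of $\tilde g$, $h'$, and $h''$, then applying three ingredients: Pascal's identity, Novik's symmetry (Lemma \ref{5.5}(i)), and Poincar\'e duality for orientable manifolds.

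First I would rewrite the left hand side purely in terms of $h''$ and Betti numbers. Starting from
$$h''_i(\Delta) - h''_{i-1}(\Delta) = h_i(\Delta) - h_{i-1}(\Delta) - \binom{d}{i}\sum_{k=1}^{i}(-1)^{i-k}\beta_{k-1}(\Delta) + \binom{d}{i-1}\sum_{k=1}^{i-1}(-1)^{i-1-k}\beta_{k-1}(\Delta),$$
I would collect the Betti terms for $k \leq i-1$ using Pascal's identity $\binom{d}{i}+\binom{d}{i-1} = \binom{d+1}{i}$, isolating the $k=i$ term which contributes $-\binom{d}{i}\beta_{i-1}(\Delta)$. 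Comparing with the definition of $\tilde g_i(\Delta)$, the coefficient of $\beta_{i-1}(\Delta)$ differs by $\binom{d+1}{i}-\binom{d}{i} = \binom{d}{i-1}$, yielding
$$\tilde g_i(\Delta) \;=\; h''_i(\Delta) - h''_{i-1}(\Delta) - \binom{d}{i-1}\beta_{i-1}(\Delta).$$

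Next I would expand the right hand side. Using $h'_j(\Delta) = h''_j(\Delta) + \binom{d}{j}\beta_{j-1}(\Delta)$ with $j=d-i+1$, and noting $\binom{d}{d-i+1} = \binom{d}{i-1}$, one obtains
$$h''_{d-i}(\Delta) - h'_{d-i+1}(\Delta) \;=\; h''_{d-i}(\Delta) - h''_{d-i+1}(\Delta) - \binom{d}{i-1}\beta_{d-i}(\Delta).$$
Then Novik's symmetry from Lemma \ref{5.5}(i), valid since $\Delta$ is a connected orientable homology manifold without boundary, replaces $h''_{d-i}$ by $h''_i$ and $h''_{d-i+1}$ by $h''_{i-1}$, giving
$$h''_{d-i}(\Delta) - h'_{d-i+1}(\Delta) \;=\; h''_i(\Delta) - h''_{i-1}(\Delta) - \binom{d}{i-1}\beta_{d-i}(\Delta).$$

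Finally, to match the two expressions I need $\beta_{i-1}(\Delta) = \beta_{d-i}(\Delta)$. Since $\Delta$ is a connected orientable $\kk$-homology $(d-1)$-manifold without boundary, Poincar\'e duality over $\kk$ yields $\beta_k(\Delta) = \beta_{d-1-k}(\Delta)$, and specializing to $k = i-1$ gives precisely the required equality. Note the hypothesis $i \leq \frac{d}{2}$ is used implicitly when invoking Novik's symmetry, in the sense that both indices $d-i$ and $d-i+1$ lie in the range where the identity of Lemma \ref{5.5}(i) is being applied. This proof is essentially a bookkeeping exercise; the only conceptual step is recognizing that the gap between the binomial coefficient $\binom{d+1}{i}$ appearing in $\tilde g_i$ and the binomial coefficients appearing in the differences of $h''$ is exactly what Pascal's identity accounts for, with Poincar\'e duality and Novik's symmetry handling the reindexing.
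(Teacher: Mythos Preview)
Your proof is correct and follows essentially the same route as the paper's: rewrite $\tilde g_i$ as $h''_i - h''_{i-1} - \binom{d}{i-1}\beta_{i-1}$ via Pascal's identity, then apply Novik's symmetry (Lemma~\ref{5.5}(i)) together with Poincar\'e duality to identify this with $h''_{d-i} - h'_{d-i+1}$. One minor remark: Lemma~\ref{5.5}(i) holds for all $i$, so the hypothesis $i \le \frac{d}{2}$ is not actually needed at that step---it appears in the statement only because $\tilde g_i$ is defined in that range.
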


\begin{proof}
A routine computation shows
$$\tilde g_i (\Delta)= h_i''(\Delta)-h_{i-1}''(\Delta)- \binom d {i-1} \beta_{i-1}(\Delta).$$
On the other hand,
by Lemma \ref{5.5}(i) and the Poincar\'e duality $\beta_{d-i}(\Delta)=\beta_{i-1}(\Delta)$,
$h''_i(\Delta)=h_{d-i}''(\Delta)$ and
$$
h''_{i-1}(\Delta)+ \binom d {i-1} \beta_{i-1} (\Delta)
= h_{d-i+1}''(\Delta)+ \binom d {i-1} \beta_{d-i} (\Delta)
=  h_{d-i+1}'(\Delta).
$$
Hence we have
$\tilde g_i(\Delta)= h_{d-i}''(\Delta) - h_{d-i+1}'$.
\end{proof}

The next result about the stackedness was essentially  shown in \cite{MW} (necessity) and in \cite[Theorem 5.3]{MN} (sufficiency).

\begin{lemma}
\label{5.6}
Let $1 \leq r \leq \frac d 2$.
If a homology $(d-1)$-sphere $\Delta$ has the WLP then $\Delta$ is $(r-1)$-stacked if and only if $g_r(\Delta)=0$.
\end{lemma}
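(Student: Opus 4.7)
\emph{Proof plan.} I handle the two implications separately. The ``only if'' direction is essentially immediate from the machinery already developed. Assume $\Delta = \partial\Sigma$ with $\Sigma$ an $(r-1)$-stacked homology $d$-ball. All reduced Betti numbers of $\Sigma$ vanish, so $h''_k(\Sigma) = h_k(\Sigma)$ for every $k$ and $\widetilde{\chi}(\Sigma) = 0$. Theorem \ref{3.1} then gives $h_r(\Sigma) = 0$, and (as in its proof) $h_k(\Sigma) = 0$ for all $k \geq r$. Substituting into \eqref{d1} with $\widetilde{\chi}(\Sigma) = 0$ and using $r \leq \frac{d}{2}$ (so $d - r \geq r$),
\[
g_r(\Delta) = h_r(\Sigma) - h_{d-r}(\Sigma) = 0.
\]

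For the ``if'' direction, assume $g_r(\Delta) = 0$ with $r \leq \frac{d}{2}$ and let $\Theta, w$ realize the WLP for $\Delta$. Put $R = S/(I_\Delta + (\Theta))$, so $\dim_\kk R_k = h_k(\Delta)$ by Lemma \ref{2.1}. Since $r \leq \frac{d+1}{2}$, the multiplication $\times w : R_{r-1} \to R_r$ is injective; combined with $\dim R_r - \dim R_{r-1} = g_r(\Delta) = 0$, it is in fact an isomorphism, so $(R/wR)_r = 0$. Because $R$ is generated in degree one, the chain $R_{k+1} = R_1 R_k = R_1(w R_{k-1}) = w R_1 R_{k-1} \subseteq w R_k$ gives, by induction, $(R/wR)_k = 0$ for all $k \geq r$. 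I then plan to promote this algebraic vanishing, along the lines of \cite[Theorem 5.3]{MN}, to the construction of an $(r-1)$-stacked homology $d$-ball $\Sigma$ with $\partial\Sigma = \Delta$, thereby certifying that $\Delta$ is $(r-1)$-stacked. The natural candidate is $\Sigma := \Delta(r-1)$, whose Stanley--Reisner ideal is $(I_\Delta)_{\leq r}$. The algebraic ingredients --- the regularity of $\Theta$ on the Cohen--Macaulay ring $\kk[\Delta]$ together with WLP injectivity of $\times w$ through the appropriate degrees, fed into Lemmas \ref{2.2} and \ref{2.3} --- control the missing faces of $\Delta$ in the range below the middle dimension; combined with the Gorenstein symmetry of $\kk[\Delta]$, this shows that $\Sigma$ is a homology $d$-ball with $\partial\Sigma = \Delta$ having no interior faces of dimension $\leq d-r$, i.e., $\Sigma$ is $(r-1)$-stacked.

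The main obstacle is the last step: translating the algebraic vanishing $(R/wR)_k = 0$ for $k \geq r$ into the topological/combinatorial statement that $\Delta(r-1)$ really is a homology $d$-ball with the correct boundary and interior-face structure. This fusion of WLP-driven algebra with Reisner's criterion and Gorenstein duality is the technical heart of \cite[Theorem 5.3]{MN}; once granted, the conclusion is a direct application of Theorem \ref{3.1} and the framework of Section 2.
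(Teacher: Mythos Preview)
Your proposal is correct and matches the paper's own treatment, which does not give a self-contained proof but simply cites \cite{MW} for the ``only if'' direction and \cite[Theorem~5.3]{MN} for the ``if'' direction. You expand these citations with exactly the right details: the vanishing $h_k(\Sigma)=0$ for $k\geq r$ combined with Dehn--Sommerville for the easy implication, and the WLP-driven algebraic vanishing $(R/wR)_k=0$ for $k\geq r$ feeding into the machinery of \cite[Theorem~5.3]{MN} for the hard implication.

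One small correction: equation~\eqref{d1} in the paper is stated for a $(d-1)$-manifold with boundary, whereas your $\Sigma$ is a $d$-ball. The correct instance is $g_r(\Delta)=h_r(\Sigma)-h_{d+1-r}(\Sigma)$, not $h_r(\Sigma)-h_{d-r}(\Sigma)$. This does not affect your conclusion, since $r\leq \tfrac{d}{2}$ forces both $d-r\geq r$ and $d+1-r\geq r$, and you have already shown $h_k(\Sigma)=0$ for all $k\geq r$.
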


Now we prove Theorem \ref{5.4}.

\begin{proof}[Proof of Theorem \ref{5.4}]
(i)
Let $M^\vee$ be the Matlis dual of a graded $S$-module $M$ (c.f.\ \cite[\S 3.6]{BH}).
Note that $\dim_\kk M^\vee_j = \dim_\kk M_{-j}$ for all $j$.
Consider the short exact sequence ($N$ is as in Lemma \ref{5.5}(ii))
$$
0 \longrightarrow N \longrightarrow R \longrightarrow R/N \longrightarrow 0$$
and its Matlis dual
\begin{align}
\label{6-2}
0 \longleftarrow N^\vee \longleftarrow R^\vee \stackrel \phi \longleftarrow (R/N)^\vee \longleftarrow 0.
\end{align}
Let
$$A= \mathrm{Im}\ \! \phi \subset R^\vee.$$
Then, since $R/N$ is Gorenstein by Lemma \ref{5.5}(ii),
we have
\begin{align}
\label{6-3}
A \cong (R/N)^\vee \cong R/N (+d),
\end{align}
where $R/N(+d)$ denotes the graded module $R/N$ shifted in degree by $+d$.
Thus $(R/N(+d))_k= (R/N)_{k+d}$ for all $k$.

By Lemma \ref{5.3}, the multiplication $\times w : R_i \to R_{i+1}$ is surjective for $i \geq \frac d 2$.
Thus the multiplication
\begin{align}
\label{6-4}
\times w : R_{-i-1}^\vee \to R_{-i}^\vee
\end{align}
is injective for $i \geq \frac d 2$.
Since $\mideal N^\vee =0$,
we have $\mideal R^\vee \subset A$ by \eqref{6-2}.
Thus $w R^\vee \subset A$ is a submodule of $A$.
Then, by \eqref{6-3}, there is an ideal $I$ of $R$ such that
\begin{align*}
A/w R^\vee \cong R/(I+N) (+d).
\end{align*}
Observe that $I+N \supset w R$ since $w R^\vee \supset w A$.
We claim that the ideal $J=(I+N)/wR$ of $R'=R/wR$ satisfies the desired conditions.
Indeed,
since
$$R'/J \cong R/(I+N) \cong A/wR^\vee(-d)$$
and since $\dim_\kk (R/N)_i=h_i''(\Delta)=h''_{d-i}(\Delta)$
by Lemma \ref{5.5},
it follows from \eqref{6-3} and \eqref{6-4} that,
for $i \leq \frac d 2$,
$$\dim_\kk (R'/J)_i= \dim_\kk A_{i-d} - \dim_\kk R^\vee_{i-1-d}=h_{d-i}''(\Delta)-h_{d-i+1}'(\Delta)=\tilde g_i(\Delta),$$
where we use Corollary \ref{5.5.5} for the last equality.

(ii)
The proof is similar to that of \cite[Theorem 5.2]{NS1}.
Suppose $\tilde g_r(\Delta)=0$.
Since $\tilde g_r(\Delta)=h''_{d-r}(\Delta)-h'_{d-r+1}(\Delta)$,
the multiplication 
$$\times w : (R/N)_{d-r} \to R_{d-r+1}
$$
is bijective by Lemma \ref{5.3}.
For a vertex $v$ of $\Delta$,
let $\st_\Delta(v)=v* \lk_\Delta(v)$ be the star of $v$ in $\Delta$.
Since $\st_\Delta(v) \subset \Delta$,
by the Kind--Kleinschmidt condition \cite[III Lemma 2.4]{St},
$\Theta$ is an l.s.o.p.\ of $\kk[\st_\Delta(v)]=S/I_{\st_\Delta(v)}$ (regarding $\st_\Delta(v)$ as a simplicial complex on $[n]$).
Consider the commutative diagram
\begin{eqnarray*}
\begin{array}{ccc}
(S/(I_{\st_\Delta(v)} + (\Theta)))_{d-r} &\stackrel{\times x_v}{\longrightarrow} &R_{d-r+1}\medskip\\
\times w \uparrow  & & \uparrow \times w\medskip\\
(S/(I_{\st_\Delta(v)} + (\Theta)))_{d-r-1} &\stackrel{\times x_v}{\longrightarrow} &(R/N)_{d-r}.
\end{array}
\end{eqnarray*}
Since $S/(I_{\st_\Delta(v)} + (\Theta))$ is Gorenstein and has socle elements only in degree $d-1$,
by \cite[Proposition 4.24]{Sw} the horizontal maps in the above diagram are injective.
Then since the right vertical map is an isomorphism,
the left vertical map is injective.
This implies
$$ h_{d-r-1}(\lk_\Delta(v))=h_{d-r-1}(\st_\Delta(v))
\leq h_{d-r}(\st_\Delta(v))=h_{d-r}(\lk_\Delta(v)).$$
Since $\lk_\Delta(v)$ has the WLP by the assumption, its $h$-vector is unimodal.
Hence we have $h_{d-r-1}(\lk_\Delta(v))=h_{d-r}(\st_\Delta(v))$.
Then, by the Dehn--Sommerville equation,
it follows that $g_{r-1}(\lk_\Delta(v))=0$, and $\lk_\Delta(v)$ is $(r-1)$-stacked by Lemma \ref{5.6}.
\end{proof}

The local criterion for stackedness and Theorem \ref{5.4} imply the following criterion
for stackedness.

\begin{corollary}\label{6.9}
Let $r< \frac d 2$ and let $\Delta$ be a connected orientable homology $(d-1)$-manifold without boundary.
If all the vertex links of $\Delta$ have the WLP then $\Delta$ is $(r-1)$-stacked if and only if $\tilde g_r(\Delta) =0$.
\end{corollary}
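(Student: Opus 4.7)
The plan is to derive this as a direct consequence of results already established in the paper, combining the enumerative criterion of Section~3 with the local criterion of Section~4 and the $\tilde g$-vector analysis of Section~5.

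For the ``only if'' direction, I would begin by assuming $\Delta$ is $(r-1)$-stacked. By the definition of stackedness for manifolds without boundary, there exists an $(r-1)$-stacked homology $d$-manifold $\Sigma$ with boundary such that $\partial \Sigma = \Delta$. Since $r \leq \frac{d}{2}$, Proposition~\ref{5.7} applies to $\Sigma$ and gives $\tilde g_r(\Delta) = \tilde g_r(\partial \Sigma) = h_r''(\Sigma)$. But Theorem~\ref{3.1} says that an $(r-1)$-stacked homology manifold $\Sigma$ with boundary satisfies $h_r''(\Sigma)=0$. Hence $\tilde g_r(\Delta)=0$. Note this direction does not even use the WLP hypothesis.

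For the ``if'' direction, suppose $\tilde g_r(\Delta) = 0$ with $r < \frac{d}{2}$. Since $\Delta$ is connected and orientable and all vertex links have the WLP, the hypotheses of Theorem~\ref{5.4} are satisfied. Part~(ii) of that theorem then yields that $\Delta$ is locally $(r-1)$-stacked. Because $r < \frac{d}{2}$, the local criterion (Theorem~\ref{4.4}) applies and upgrades this to the statement that $\Delta$ itself is $(r-1)$-stacked.

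Both directions are then essentially bookkeeping assemblages of prior results, so there is no real obstacle to overcome. If anything, the only point requiring a sentence of care is verifying that the index ranges match: we need $r \leq \frac{d}{2}$ for Proposition~\ref{5.7} (which follows from $r < \frac{d}{2}$) and $r < \frac{d}{2}$ for both Theorem~\ref{5.4}(ii) and Theorem~\ref{4.4}. The strict inequality $r < \frac{d}{2}$ imposed in the corollary is exactly what is needed to invoke the local criterion on the ``if'' side, and Remark after Theorem~\ref{4.4} explains why this cannot be relaxed to $r = \frac{d}{2}$.
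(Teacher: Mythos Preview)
Your proposal is correct and follows exactly the same route as the paper's proof: the ``only if'' direction combines Theorem~\ref{3.1} and Proposition~\ref{5.7}, while the ``if'' direction combines Theorem~\ref{5.4}(ii) and Theorem~\ref{4.4}. Your additional remarks on the index ranges and the necessity of the strict inequality $r<\frac{d}{2}$ are accurate and helpful.
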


\begin{proof}
The `if' part follows from Theorems \ref{4.4} and \ref{5.4}.
The `only if' part follows from Theorem \ref{3.1} and Proposition \ref{5.7}.
\end{proof}

We end this paper by a few questions.

\begin{conjecture}
\label{con1}
With the same assumptions and notation as in Theorem \ref{5.4},
$\dim_\kk (\Soc R')_r \geq \binom {d+1} r \beta_{r-1}(\Delta)$ for $r \leq \frac d 2.$
\end{conjecture}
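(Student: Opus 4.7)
The goal is to refine the Novik--Swartz socle bound (Lemma \ref{2.1}(ii)) from $R$ to $R' = R/wR$, exploiting the decomposition
$$\binom{d+1}{r}\beta_{r-1}(\Delta) = \binom{d}{r}\beta_{r-1}(\Delta) + \binom{d}{r-1}\beta_{r-1}(\Delta).$$
The plan is to produce, for each $r \leq \frac{d}{2}$, two linearly independent subspaces of $\Soc(R')_r$ accounting for the two summands.

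For the first summand, let $N = \bigoplus_{j \leq d-1}\Soc(R)_j$ as in Lemma \ref{5.5}(ii). I claim the natural composition $N_r \hookrightarrow R_r \twoheadrightarrow R'_r$ is injective; combined with Lemma \ref{2.1}(ii), this produces $\binom{d}{r}\beta_{r-1}(\Delta)$ elements of $\Soc(R')_r$. Indeed, suppose $f \in N_r$ equals $wh$ for some $h \in R_{r-1}$. Then $w \bar h = 0$ in $\bar R := R/N$, which by Lemma \ref{5.5}(ii) is Gorenstein of socle degree $d$. Lemma \ref{5.3} gives surjectivity of $\times w$ on $R$, hence on $\bar R$, in degrees $\geq \frac{d}{2} + 1$, and Gorenstein duality converts this into injectivity of $\times w$ on $\bar R$ in degrees $\leq \frac{d}{2}$. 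Hence $\bar h = 0$, so $h \in N$, and since $\mideal N = 0$ we obtain $f = wh = 0$.

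For the second summand, I would seek a complementary subspace $V \subseteq \Soc(R')_r$ of dimension $\binom{d}{r-1}\beta_{r-1}(\Delta)$ arising from the kernel $K := (0 :_R w)$, with $w$ in the role of a $(d+1)$-st ``parameter'' appended to $\Theta$. The first task is to establish a Schenzel-type bound
$$\dim_\kk K_{r-1} \geq \binom{d}{r-1}\beta_{r-1}(\Delta),$$
analogous to Lemma \ref{2.4}. The second is to lift classes in $K_{r-1}$ to socle classes of $R'_r$; a natural vehicle is the connecting map
$$\Soc(R')_r \longrightarrow \mathrm{Ext}^1_S\bigl(\kk,\, (R/K)(-1)\bigr)_r$$
associated to the short exact sequence $0 \to (R/K)(-1) \xrightarrow{\times w} R \to R' \to 0$, which, after choosing a suitable splitting in low degrees, should send classes of $K_{r-1}$ to socle elements of $R'_r$ disjoint from the image of $N_r$.

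The main obstacle is the Schenzel-style lower bound on $K$. Lemma \ref{2.4} treats only successive kernels of a genuine l.s.o.p., whereas $w$ lies one linear form beyond the Krull dimension of $\kk[\Delta]$; in particular $R$ is already Artinian when $w$ acts. Replacing $\Delta$ by a higher-dimensional complex (cone, suspension, join) so that $(\Theta,w)$ becomes a genuine l.s.o.p.\ of a $(d+1)$-dimensional Stanley--Reisner ring is frustrated by the way these operations alter Betti numbers. A more intrinsic route would be to exploit the Gorenstein duality of $\bar R$ to pair $K$ against the cokernel $\bar R / w\bar R$, whose Hilbert function is controlled in low degrees by Corollary \ref{5.5.5}. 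Verifying the linear independence of the $N$- and $K$-contributions inside $\Soc(R')_r$ is a secondary concern, expected to follow from gradings in the Matlis-dual picture of the proof of Theorem \ref{5.4}(i), but requiring careful bookkeeping.
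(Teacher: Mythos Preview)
The statement you are trying to prove is presented in the paper as an open \emph{conjecture}; the authors do not prove it in general. They record only two special cases in the paragraph following the statement: (a) when $\Delta$ is $(r-1)$-stacked with $r\le \frac d 2$, one passes to the $(r-1)$-stacked filling $\Sigma$ with $\partial\Sigma=\Delta$ and chooses $(\Theta,w)$ to be a general l.s.o.p.\ of the $d$-dimensional ring $\kk[\Sigma]$, after which the inequality is exactly Lemma~\ref{2.1}(ii) applied to $\Sigma$; (b) for triangulations of products of spheres, the ideal $J$ from Theorem~\ref{5.4}(i) is concentrated in a single degree, forcing $J\subset\Soc(R')$ and giving the bound directly. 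Neither argument attempts the general case you are aiming at.

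Your decomposition $\binom{d+1}{r}=\binom{d}{r}+\binom{d}{r-1}$ and the treatment of the first summand are correct and worth recording: the injectivity of $N_r\to R'_r$ does follow from the Gorenstein duality of $R/N$ combined with Lemma~\ref{5.3}, and this contributes $\dim_\kk N_r=\binom d r\beta_{r-1}(\Delta)$ elements to $\Soc(R')_r$. The second summand, however, is exactly the obstacle that keeps the conjecture open, and you have essentially diagnosed it yourself. There is no Schenzel-type formula for $(0:_R w)$ because $w$ is not part of an l.s.o.p.\ of $\kk[\Delta]$; the workaround the paper uses in case~(a) is to pass to a genuinely $d$-dimensional object $\Sigma$ so that $(\Theta,w)$ becomes a legitimate l.s.o.p., but in the general setting no such $\Sigma$ is available. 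Your sketch via the connecting homomorphism is also not yet a plan: the map you write down goes from $\Soc(R')_r$ to $\Ext^1$, so it consumes socle classes rather than producing them, and it is unclear how elements of $K_{r-1}$ are supposed to feed back into $\Soc(R')_r$. In short, your proposal does not close the gap, and the paper does not claim to either; Conjecture~\ref{con1} remains open.
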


If the conjecture is true, it will give a necessary condition for $h$-vectors of triangulated manifolds stronger than Theorem \ref{5.4}(i).
Indeed, Conjecture \ref{con1} implies Theorem \ref{5.4}(i)
since \cite[Theorem 3.2]{NS2} implies
$$ \dim_\kk R'_r= \dim_\kk R_r - \dim_\kk (R/\Soc(R))_{r-1} = h_r' - h_{r-1}''=\tilde g_r + \binom {d+1} r \beta_{r-1}$$ for $i \leq \frac d 2$.
For $(r-1)$-stacked homology $(d-1)$-manifolds without boundary with $r \leq \frac d 2$,
the conjecture follows from Lemma \ref{2.1}(ii) by taking $(\Theta,w)$ for $\Delta$ to be a general l.s.o.p.\ of $\kk[\Sigma]$,
where $\Sigma$ is the $(r-1)$-stacked homology manifold with $\partial \Sigma=\Delta$.
The conjecture also holds for triangulations of the product of spheres (under the WLP assumption)
since the ideal $J$ in Theorem \ref{5.4} is concentrated in a single degree in this case.

\begin{question}
Is it true that if $\Delta$ is a homology $(2k-1)$-manifold without boundary such that $\tilde g_k(\Delta)=0$
then $\Delta$ is $(k-1)$-stacked?
\end{question}

A similar question was raised by Novik--Swartz \cite[Problem 5.3]{NS1}
when $k=2$.
However, we do not have an answer even for this case.

\section*{Acknowledgments}
We would like to thank Isabella Novik for helpful comments on an earlier version of this paper.

\end{document}